\DeclareMathSymbol{\twoheadrightarrow}  {\mathrel}{AMSa}{"10}
        \def\GG{{\mathcal G}}
\def\Q{{\mathbb Q}}
\def\Z{{\mathbb Z}}
\def\C{{\mathbb C}}
\def\W{{\mathbb W}}
\def\D{{\mathbb D}}
\def\RR{{\mathbb R}}
\def\F{{\mathbb F}}
\def\P{{\mathbb P}}
\def\f{{\tilde F}}
                     \def\f0{{\mathfrak f}}
             \def\K{\mathrm{K}}
\def\A8{{\mathbf A}_8}
\def\Alt{\mathrm{Alt}}
\def\RR{{\mathfrak R}}
\def\Perm{\mathrm{Perm}}
\def\Gal{\mathrm{Gal}}
\def\End{\mathrm{End}}
\def\Aut{\mathrm{Aut}}
                  \def\cl{\mathrm{cl}}
\def\alb{\mathrm{alb}}
\def\ST{{\mathbf S}}
\def\AT{\mathbf{A}}
\def\dim{\mathrm{dim}}
\def\Oc{{\mathcal O}}
                            \def\Prym{\mathrm{Prym}}
\newtheorem{thm}{Theorem}[section]
\newtheorem{lem}[thm]{Lemma}
\theoremstyle{definition}
\newtheorem{defn}[thm]{Definition}
\newtheorem{ex}[thm]{Example}
\newtheorem{rem}[thm]{Remark}
\newtheorem{rems}[thm]{Remarks}
        \newtheorem{sect}[thm]{}
\begin{document}

\title[Prym varieties]{Prym varieties that are not isomorphic to Jacobians}

\author[Yuri G.\ Zarhin]{Yuri G.\ Zarhin}
\address{Department of Mathematics, Pennsylvania State University,
University Park, PA 16802, USA}

\email{zarhin\char`\@math.psu.edu}

\thanks{The  author was partially supported by  the Travel Support for Mathematicians Grant MPS-TSM-00007756 from the Simons Foundation. Most of this work was done in August-September 2025 during his stay at Max-Planck-Institut f\"ur Mathematik (Bonn, Germany), whose hospitality and support are gratefully acknowledged.}

\begin{abstract}
We study Prym varieties of ramified (at precisely two points) double covers of smooth irreducible complex projectives curves that admit an automorphism
of  prime order $p>2$. Using Galois  theory, we give an explicit construction of Prym varieties that are not isomorphic to jacobians
(even if one ignores the polarizations).

\end{abstract}

\subjclass[2020]{14H40}
\keywords{Jacobians, Prym varieties, endomorphisms rings of abelian varieties}

\maketitle

\section{Introduction}
In this paper we study   
complex abelian varieties $X$ 
 endowed with a ring homomorphism
\begin{equation}
\label{embed}
\Z[\zeta_p] \hookrightarrow \End(X), \quad 1 \mapsto 1_X
\end{equation}
where  $p$ is an odd prime, $\zeta_p=\exp(2\pi \mathbf{i}/p)\in \C$ is a primitive $p$th root of unity in the field $\C$ of complex numbers,
 $\Z[\zeta_p]$ is the $p$th cyclotomic ring, and $1_X$ is the identity automorphism  of $X$. 
Using Galois theory and the theory of Prym varieties, we consruct explicitly such abelian varieties that are not isomorphic to jacobians of curves.

In order to explain our approach, let us start with the following notation and definitions.
As usual, $\Z$ and $\Q$ denote the ring of integers and the field of rational numbers,
and $\Q(\zeta_p)$  is the $p$th cyclotomic field.
We write $\lambda$ for
the (principal) maximal ideal $(1-\zeta_p)\cdot\Z[\zeta_p]$ of $\Z[\zeta_p]$.

Let $f(x)\in\C[x]$ be a polynomial of degree $n\ge 4$ without repeated roots.
We assume that $p$ does {\sl not} divide $n$.
Let $C_{f,p}$ be a smooth projective model of the smooth plane  affine curve
$y^p=f(x)$. It is well known (\cite{Koo}, pp. 401-402, \cite{Towse}, Prop. 1 on
p. 3359, \cite{Poonen}, p. 148) that the genus $g(C_{f,p})$  of $C_{f,p}$ is
$(n-1)(p-1/2$.

The map $(x,y)\mapsto (x,\zeta_p y)$ gives rise to a non-trivial birational
automorphism $\delta_p:C_{f,p}\to C_{f,p}$ of period $p$. By functoriality,
$\delta_p$ induces the linear operator in the space of differentials of the
first kind
$$\delta_p^{*}: \Omega^1(C_{f,p}) \to \Omega^1(C_{f,p}).$$
Its spectrum consists of (not necessarily all) primitive $p$th roots of unity
(eigenvalues) $\zeta_p^{-i}$ ($1 \le i \le p-1$);
the multiplicity of $\zeta_p^{-i}$ is  $[ni/p]$
\cite{ZarhinCamb}.

 Let $J(C_{f,p})$ be the jacobian of $C_{f,p}$; it is an  abelian
variety of dimension $g(C_{f,p})$ that carries the canonical principal polarization. We write $\End(J(C_{f,p}))$ for
the ring of endomorphisms of $J(C_{f,p})$. By Albanese functoriality, $\delta_p$
induces an automorphism of $J(C_{f,p})$ which we still denote by $\delta_p$; it
is known ([11], p. 149, [14], p. 448) that $\sum_{i=0}^{p-1}\delta_p^i=0$ in
$\End(J(C_{f,p}))$. This gives us the ring embedding
$$\Z[\zeta_p]\cong \Z[\delta_p]\subset \End(J(C_{f,p})), \ \zeta_p \mapsto \delta_p,$$
which sends $1$ to the identity automorphism of $J(C_{f,p})$
(\cite[p. 149]{Poonen}, [14], \cite[p. 448]{Schaefer}). The canonical principal polarization on $J(C_{f,p})$
is $\delta_p$-invariant.

 Let $K$ be a subfield of $\C$ that contains $\zeta_p$ and all
coefficients of $f(x)$, i.e.,
$$\Q(\zeta_p)\subset K, \quad f(x)\in K[x]\subset \C[x].$$
Let $\bar{K}$ be the algebraic closure of $K$ in  $\C$ and 
$\RR_f\subset \bar{K}\subset \C$ be the set of roots of $f(x)$ and $K(\RR_f)$ the
splitting field of $f(x)$ over $K$. Clearly, $K(\RR_f)$ is a finite Galois
extension of $K$. We write $\Gal(f)$ for the (finite) Galois group
$\Gal(K(\RR_f)/K)$. One may view $\Gal(f)$ as a certain permutation subgroup of
the group $\Perm(\RR_f)$ of  permutations of $\RR_f$. If we (choose an order on
$\RR_f$, i.e.,) denote  the roots of $f(x)$ by $\{\alpha_1, \dots , \alpha_n\}$
then we get a group isomorphism between $\Perm(\RR_f)$ and the full symmetric
group $\ST_n$ and $\Gal(f)$ becomes a certain subgroup of $\ST_n$.

It is proven in \cite{ZarhinCamb,ZarhinA5} that if  $ n \ge 5$ and $\Gal(f)$ is either $\ST_n$, or $\AT_n$ then
$$\End(J(C_{f,p}))=\Z[\delta_p]\cong \Z[\zeta_p].$$

Now, in order to deal with Prym varieties,  we will consider appropriate cases when the curve
$C_{f,p}$ is equipped with an involution.
let us assume that  $f(x)$ is an {\sl odd} polynomial of {\sl odd} degree $n=2m+1$.
Then $f(x)=x h(x)$ where $h(x)$ is an even polynomial of degree $2m$ with nonzero constant term
(recall that $f(x)$ has no repeated roots).
We will be mostly  interested in the situation when $\Gal(f)$ coincides with (or contains) the Weyl group
$\W(\D_m)$ of the root system
 $\D_m$ in the following sense. Since $f(x)$ is  odd and without repeated roots, there exist
$m$ distinct non-zero roots $\{\beta_1, \dots , \beta_m\}$ of $f(x)$ such that
($\beta_i \ne \pm \beta_j$ if $i\ne j$ and) 
$$\RR_h=\{\pm \beta_1, \dots , \pm \beta_m
\} 
\subset \{0\} \cup \{\pm \beta_1, \dots , \pm \beta_m
\}=\RR_f \subset \bar{K}.$$ Then $\Gal(f)$ may be viewed as a certain
group of  permutations  of $\RR_f$ of the form
 $$(\epsilon,s): 0 \mapsto 0, \ \beta_i \mapsto \epsilon(i) \beta_{s(i)}, \ -\beta_i \mapsto -\epsilon(i) \beta_{s(i)}$$
 where $s \in \ST_m$ is an arbitrary permutation of $\{1,2, \dots,m\}$ and 
is  a
 ``sign function'' $\epsilon: \{1,2, \dots,m\} \to \{\pm 1\}$.
 We define    $\W(\D_m)$ as the group of all permutations $(\epsilon,s)$ of $\RR_f$ such that
 $s \in \ST_m$ and the sign function $\epsilon$ 
satisfies the condition $\prod_{i=1}^m \epsilon(i) =1$. 

The regular map of curves
$$\pi:C_{f,p}\to \P^1, \ (x,y)\mapsto x$$
has degree $p$ and branches exactly at $0$, the $2m$-element set 
$\{\alpha\mid \alpha\in \RR_h\}$ and $\infty$. 
 Since $p$ is an odd integer and $f(x)$ is an even polynomial, the curve $C_{f,p}$ admits the
involution
$$\delta_2:C_{f,p} \to C_{f,p}, \quad (x,y) \mapsto (-x,-y),$$
which commutes with $\delta_p$.
The automorphism $\delta_2:C_{f,p} \to C_{f,p}$ has exactly two fixed points,
namely, $\pi^{-1}(0)$ and $\pi^{-1}(\infty)$. It is well known that the
quotient $\tilde{C}_{f,p}=C_{f,p}/(1,\delta_2)$ is a smooth (irreducible)
projective curve; $C_{f,p}\to \tilde{C}_{f,p}$ is a double covering that is
ramified at exactly two points, namely  $\pi^{-1}(0)$ and
$\pi^{-1}(\infty)$. The Hurwitz's formula implies that the genus of
$\tilde{C}_{f,p}$ is $m(p-1)/2$.  Clearly, all  ramification points of $\pi$ in $C_{f,p}$ are
$\delta_p$-invariant. By abuse of notation, we denote $\pi^{-1}(\infty)$ by
$\infty$. Let us put
$$B=\pi^{-1}(\RR_h)=\{(\alpha,0)\mid \alpha\in \RR_h\}\subset C_{f,p}(\bar{K}).$$
Clearly, all elements of $B$ are $\delta_p$-invariant. On the other hand, if
$P= (\alpha,0)\in B$ then $\delta_2(P)=(-\alpha,0)\in B$.

By Albanese functoriality, $\delta_2$ induces the
automorphism of $J(C_{f,p})$ which we still denote by $\delta_2$ and which
(still) commutes with the automorphism $\delta_p$ of $J(C_{f,p})$. We have
$\delta_2^2=1$ in $\End(J(C_{f,p}))$.

By definition \cite{MumfordP}, the Prym variety (aka prymian) of the double cover $C_{f,p}\to \tilde{C}_{f,p}$
is 
$$\mathrm{Prym}(C_{f,p}):=(1-\delta_2)J(C_{f,p}).$$
Since $\delta_p$ commutes with $\delta_2$, $\mathrm{Prym}(C_{f,p})$ is a $\delta_p$-invariant
abelian subvariety of $J(C_{f,p})$, which gives rise to the ring homomorphism
\begin{equation}
\label{PrymP}
\Z[\zeta_p] \to \End(\mathrm{Prym}(C_{f,p})),
\end{equation}
which sends $1$ to the identity automorphism of $\mathrm{Prym}(C_{f,p})$. This implies
easily that the map \eqref{PrymP} is a ring embedding; we denote by $\delta_{p,-}$
the image of $\zeta_p$ in $\End(\mathrm{Prym}(C_{f,p}))$. Clearly, $\delta_{p,-}$
satisfies the $p$th cyclotomic equation in $ \End(\mathrm{Prym}(C_{f,p}))$.
It is known that 
$$\dim(\mathrm{Prym}(C_{f,p})))=g((C_{f,p}))-g( \tilde{C}_{f,p})=$$
$$(n-1)(p-1)/2 -m(p-1)/2=
2m(p-1)/2 -m(p-1)/2=m(p-1)/2.$$ 
Since $C_{f,p}\to \tilde{C}_{f,p}$ is ramified at precisely two points, it follows from a
theorem of Mumford \cite{MumfordP} that the restriction of the principal polarization on 
$J(C_{f,p})$ to $\mathrm{Prym}(C_{f,p}))$ is twice a principal polarization on $\mathrm{Prym}(C_{f,p}))$.
This gives us the canonical principal polarization on $\mathrm{Prym}(C_{f,p}))$, which is obviously
$\delta_{p,-}$-invariant.

We still need one more definition. Let us
consider the $m$-dimensional $\F_p$-vector space of {\sl odd} functions
$$V_f^{-}:=\{\phi:\RR_f \to \F_p\mid \phi(-\alpha)=-\phi(\alpha) \ \forall
\alpha\in \RR_f\}$$ endowed with the natural structure of a $\Gal(K)$-module.

Our main result is the following, which in the case $m \equiv -1 (\bmod p)$
gives an explicit description of the spectrum of $\delta_{p,-}$ with respect to its
action on the differentials of the first kind on the prymian. In addition, if $\Gal(f)$ coincides with
$\W(\D_m)$  then we prove that $\End( \mathrm{Prym}(C_{f,p}))$ coincides with
$\Z[\delta_{p,-}]\cong \Z[\zeta_p]$.
 (The case $p=3$ was done earlier in \cite{ZarhinI}.)

\begin{thm}
\label{main0} Suppose that $r\ge 2$ is an integer and let us put
$$m:=pr-1,  \quad n=2m+1=2pr-1\ge 5.$$
Let 
$f(x)\in \C[x]$ be an odd polynomial of degree $n=2m+1$ without repeated roots. Then:

\begin{itemize}
\item[(i)]
\begin{enumerate}
\item

The principally
polarized abelian variety $\mathrm{Prym}(C_{f,p})$ is not isomorphic to the canonically
polarized jacobian of any smooth irreducible projective curve of genus $m(p-1)/2$.

\item By functoriality, $\delta_{p,-}$ induces the linear operator
$$\delta_{p,-}^{*}: \Omega^1(\mathrm{Prym}(C_{f,p})) \to \Omega^1(\mathrm{Prym}(C_{f,p}))$$
in the space of differentials of the first kind on $\mathrm{Prym}(C_{f,p})$. 
 The spectrum of $\delta_{p,-}^{*}$ consists of all primitive $p$th roots of unity $\zeta_p^{-j}$ ($1 \le j \le p-1$). The multiplicity of eigenvalues
 $\zeta_p^{-j}$ is $rj$ if $j$ is even and $rj-1$ if $j$ is odd.
 In particular, all these multiplicities are relatively prime if and only if 
 $r$ is even.

\end{enumerate}
 \item[(ii)] Suppose that $K$ is a subfield
of $\C$ that contains $\zeta_p$ and all coefficients of $f(x)$. Then:
\begin{itemize}
\item[(a)]
 The abelian variety $\mathrm{Prym}(C_{f,p})$ and its automorphism $\delta_{p,-}$ are defined over
$K$. In addition, the Galois submodule $\mathrm{Prym}(C_{f,p})^{\delta_{p,-}}$ of
$\delta_{p,-}$-invariants of $\mathrm{Prym}(C_{f,p})(\bar{K})$ is canonically isomorphic to
 $V_f^{-}$.
  \item[(b)]
 Assume
additionally that  $r$ is even and
$\Gal(f)$ coincides with $\W(\D_m)$. Then:
\begin{itemize}
\item[(b1)] $\End(\mathrm{Prym}(C_{f,p}))=\Z[\delta_{p,-}] \cong
\Z[\zeta_p]$. In particular, $\mathrm{Prym}(C_{f,p})$ is an
absolutely simple abelian variety.
 \item[(b2)] The abelian variety
$\mathrm{Prym}(C_{f,p})$ is isomorphic neither to the jacobian of a connected smooth projective curve
nor to a product of jacobians of smooth projective curves (even if one ignores
polarizations).
\end{itemize}
\end{itemize}
\end{itemize}
\end{thm}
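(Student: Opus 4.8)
The plan is to treat the parts in the order (i)(2), then (i)(1)/(i)(3), then (ii)(a), (ii)(b1), and finally (ii)(b2). First I would compute the spectrum of (i)(2) directly from the standard basis $\{x^{a}\,dx/y^{j} : 1\le j\le p-1,\ 0\le a\le \lfloor jn/p\rfloor-1\}$ of $\Omega^1(C_{f,p})$, on which $\delta_p^{*}$ acts by $\zeta_p^{-j}$ (this recovers the multiplicity $\lfloor jn/p\rfloor$). Since $\delta_2^{*}(x^{a}\,dx/y^{j})=(-1)^{a+j+1}x^{a}\,dx/y^{j}$, the space $\Omega^1(\mathrm{Prym}(C_{f,p}))$ is the $\delta_2$-anti-invariant part, spanned by those basis forms with $a\equiv j\pmod 2$. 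With $n=2pr-1$ one has $\lfloor jn/p\rfloor=2rj-1$, and counting the $a$ of the correct parity in $\{0,\dots,2rj-2\}$ yields $rj$ if $j$ is even and $rj-1$ if $j$ is odd. The gcd assertion then follows since $\gcd_j d_j$ divides $\gcd(d_1,d_2)=\gcd(r-1,2r)=\gcd(r-1,2)$, which is $1$ exactly when $r$ is even, while for odd $r$ every $d_j$ is visibly even.

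For the polarized non-isomorphism (i)(1)/(i)(3) I would argue by contradiction with Torelli. If $\mathrm{Prym}(C_{f,p})$ were isomorphic, as a principally polarized abelian variety, to the canonically polarized $(J(C),\Theta)$ of a curve $C$ of genus $\dim\mathrm{Prym}(C_{f,p})=m(p-1)/2$, then $\delta_{p,-}$ would correspond to an automorphism of $(J(C),\Theta)$; since $p$ is odd, the Torelli description of $\Aut(J(C),\Theta)$ forces this to be (the action of) an automorphism $\tau$ of $C$ of exact order $p$, whose multiset of eigenvalue multiplicities on $\Omega^1(C)$ equals $\{d_j\}$ (up to a relabeling $j\mapsto cj$). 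Applying the Lefschetz fixed-point formula to all nontrivial powers of $\tau$, which share the same fixed locus, and using the constancy $d_j+d_{p-j}=rp-1$, one finds that the quotient $C/\langle\tau\rangle$ has genus $0$ and exactly $rp+1$ branch points. The Chevalley--Weil formula then writes each multiplicity as $-1+\sum_i\langle jm_i/p\rangle$ with every fractional part $\ge 1/p$, so each multiplicity exceeds $r-1$ and is therefore $\ge r$; this contradicts the presence of the value $r-1=d_1$ among the $d_j$ and settles both (i)(1) and (i)(3).

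Part (ii)(a) is largely formal: $C_{f,p}$, $\delta_p$, $\delta_2$, hence $J(C_{f,p})$, $\delta_2$ and $\mathrm{Prym}(C_{f,p})=(1-\delta_2)J(C_{f,p})$ together with $\delta_{p,-}$ are all defined over $K$. For the module identification I would note $\mathrm{Prym}(C_{f,p})^{\delta_{p,-}}=\ker(1-\delta_{p,-})=\mathrm{Prym}(C_{f,p})[\lambda]$, recall the known $\Gal(K)$-equivariant isomorphism $J(C_{f,p})[\lambda]\cong V_f$ (functions $\RR_f\to\F_p$ of zero sum), and split off the $\delta_2$-odd part; as $2$ is invertible modulo $p$, the projector $(1-\delta_2)/2$ is defined on $\lambda$-torsion and identifies $\mathrm{Prym}(C_{f,p})[\lambda]$ canonically with $V_f^{-}$. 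For (ii)(b1) the essential input is the structure of $V_f^{-}\cong\F_p^{m}$ when $\Gal(f)=\W(\D_m)$: it is the signed-permutation (reflection) representation of $\W(\D_m)$, and I would verify that it is very simple in Zarhin's sense (using $p$ odd and $m=pr-1\equiv-1\pmod p$). Zarhin's criterion then shows $\mathrm{Prym}(C_{f,p})$ is either absolutely simple with $\End^0=\Q(\zeta_p)$ or isogenous to a proper self-power $B^{k}$; the latter would force $k\mid\gcd_j d_j$, which is impossible precisely because $r$ is even (coprime multiplicities). Hence $\End(\mathrm{Prym}(C_{f,p}))=\Z[\delta_{p,-}]\cong\Z[\zeta_p]$ and $\mathrm{Prym}(C_{f,p})$ is absolutely simple. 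I expect the verification of very-simplicity for the type-$\D_m$ module to be the main technical obstacle.

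Finally, for (ii)(b2) I would use absolute simplicity to reduce at once to a single jacobian, since a product of two or more positive-dimensional jacobians is not simple, and then exclude $\mathrm{Prym}(C_{f,p})\cong J(C)$ even as \emph{unpolarized} abelian varieties. Transporting $\End=\Z[\zeta_p]$ to $J(C)$, the Rosati involution $\dagger$ attached to the canonical polarization $\Theta$ is the unique positive involution of the CM field $\Q(\zeta_p)$, namely complex conjugation; thus $\delta_{p,-}=\zeta_p$ satisfies $\delta_{p,-}^{\dagger}\delta_{p,-}=\bar\zeta_p\zeta_p=1$ and therefore preserves $\Theta$. Consequently $\delta_{p,-}$ is an automorphism of the polarized $(J(C),\Theta)$, and the argument of (i)(1) applies verbatim to produce the same Chevalley--Weil contradiction. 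This shows $\mathrm{Prym}(C_{f,p})$ is isomorphic neither to a jacobian nor to a product of jacobians, completing the proof.
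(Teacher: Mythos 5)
Your overall architecture matches the paper's: (i)(2) by the explicit eigenbasis of $\Omega^1(C_{f,p})^{-}$, (ii)(a) by descent of $(1-\delta_2)J(C_{f,p})$ and the identification $\mathrm{Prym}(C_{f,p})[\lambda]\cong V_f^{-}$, and (ii)(b2) by transporting the Rosati involution (complex conjugation on $\Z[\zeta_p]$) to force $\delta_{p,-}$ to preserve the theta polarization, then reducing to (i)(1). For (i)(1)/(i)(3) you essentially re-prove, via Torelli, Hurwitz--Lefschetz and Chevalley--Weil, the criterion that the paper simply cites as Theorem 2.17 of \cite{ZarhinAB}; your computation ($g_0=0$, exactly $rp+1$ branch points, hence every multiplicity at least $r$, against $d_1=r-1$) is correct and makes that step self-contained, which is a reasonable trade.

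The genuine gap is in (ii)(b1). The module $V_f^{-}$ is \emph{not} very simple as a $\W(\D_m)$-module: the normal subgroup $E_m^0\cong(\Z/2\Z)^{m-1}$ acts diagonally through the $m$ pairwise distinct characters $\epsilon\mapsto\epsilon(i)$, so (as $p$ is odd) the $\F_p$-subalgebra of $\End_{\F_p}(V_f^{-})$ generated by its image is the full $m$-dimensional diagonal algebra --- a conjugation-stable subalgebra that is neither $\F_p$ nor all of $\End_{\F_p}(V_f^{-})$. Hence the verification you flag as ``the main technical obstacle'' cannot succeed as stated. What is true, and what the paper proves in Lemma \ref{centralizer} (using that the stabilizer of a root in $2^{m-1}\cdot\ST_m$ has exactly three orbits on $\RR_h$, together with Passman's orbit-counting lemma), is the weaker statement $\End_{\Gal(K)}(V_f^{-})=\F_p$. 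That weaker input, fed into Theorem 6.11 of \cite{ZarhinKumar}, yields precisely the dichotomy you then invoke ($\Q[\delta_{p,-}]$ is its own centralizer, hence either equals $\End^0(\mathrm{Prym}(C_{f,p}))$ or the prymian is isogenous to a proper power), and the conclusion is finished by the multiplicity data: the paper uses the fact that the $d_j$ are pairwise distinct via Theorem 2.3 of \cite{ZarhinCamb}, while your coprimality variant serves the same purpose. So the repair is to replace ``very simple'' by ``$\End_{\Gal(K)}(V_f^{-})=\F_p$'' and prove that via the orbit count; everything downstream of that substitution survives.
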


\begin{rem}
A complete list of those (generalized) Prym varieties that are isomorphic, as
canonically principally polarized abelian varieties, to jacobians of smooth projective
curves or to products of them was given by V.V. Shokurov
\cite{SlavaInv,SlavaIzv}. In the course of the proof of Theorem
\ref{main0}(i)(1) we use a different approach based on the study of the action of
the period $p$ automorphism on the differentials of the first kind
\cite{ZarhinMF,ZarhinAB}. 
\end{rem}

\begin{rem}
\label{scheme}
The  proof  of Theorem \ref {main0} is based on a combination of ideas from our previous papers.Let us explain briefly how various conditions in Theorem \ref{main0}  are used.
Mathods of representation theory  \cite{ZarhinKumar} allow us to prove that  the ``largeness'' of $\Gal(f)$ combined with Galois properties of the torsion points
of the prymian  implies that $\Z[\delta_{p,-}] $ coincides with its own centralizer
in $\End(\mathrm{Prym}(C_{f,p}))$ and therefore $\Q[\delta_{p,-}] $ coincides with its own centralizer
in $\End^{0}(\mathrm{Prym}(C_{f,p}))$. In particular, $\Q[\delta_{p,-}]\cong \Q(\zeta_p) $ contains the center of $\End^{0}(\mathrm{Prym}(C_{f,p}))$.
On the other hand, the multiplicity properties of
eigenvalues of  $\delta_{p,-}^{*}$  combined with basic properties  of the Hodge group of
$\mathrm{Prym}(C_{f,p}))$ \cite{ZarhinCamb}
imply that $\Q[\delta_{p,-}] $  cannot contain the center of $\End^{0}(\mathrm{Prym}(C_{f,p}))$
as a proper subalgebra.  This implies that  $Q[\delta_{p,-}] $ is the center that coincides with its own centralizer, which means that $Q[\delta_{p,-}] =\End^{0}(\mathrm{Prym}(C_{f,p}))$.  On the other hand,  the explicit information about multiplicities allows us to conclude (thanks to results of
\cite{ZarhinAB})
 that our prymian is not isomorphic to any jacobian.

\end{rem}

The paper is organized as follows. In Section \ref{group} we discuss permutation modules
 related to Galois groups of odd polynomials. In Section \ref{geom} we study superelliptic
 jacobians and prymians and prove the main result. In Appendix (Section \ref{appen})
 we compute the Galois groups of certain polynomials that were used in order to construct
 explicit examples of Prym varieties that are not isomorphic to jacobians.

{\bf Acknowledgements}. I am grateful to V.V. Shokurov and A.E. Zalesski  for useful discussions. 
My special thanks go to the referee, whose comments helped to improve the exposition.

\section{Galois groups of odd polynomials and permutation modules}
\label{group}

Let $K$ be a field of characteristic zero, $\bar{K}$  its algebraic closure and
$\Gal(K)=\Aut(\bar{K}/K)$ its absolute Galois group. 
Let us assume that $K$ contains 
primitive $p$th root of unity and denote it by $\gamma$.

\begin{sect} {\bf Galois groups of odd polynomials}.
Let $r \ge 2$ be an  integer and $n=2pr-1$.  Let us put
$$m=\frac{n-1}{2}=pr-1,$$
Clearly, $m \ge 5$; it   is odd if and only if $r$ is even.
Let
$f(x)\in K[x]$ be a degree $n$ {\sl odd} polynomial {\sl without repeated roots}.
Then  $0$ is simple root of $f(x)$ and
 there exist
$m$ distinct non-zero roots $\{\beta_1, \dots , \beta_m\}$ of $f(x)$ such that
the $n$-element set $\RR_f$ of roots of $f(x)$ coincides with $\{0\} \cup \{\pm
\beta_1, \dots , \pm \beta_m \}\subset \bar{K}$ of all roots of $f(x)$.
Clearly, $\RR_f$ is Galois-stable. We write $\Perm(\RR_f)$ for the group of
permutations of the $n$-element set $\RR_f$. Let $\Gal(f)$ be the image of
$\Gal(K)$ in $\Perm(\RR_f)$. If $K(\RR_f)\subset \bar{K}$ is the splitting field of $f(x)$ obtained by
adjoining to $K$ all elements of $\RR_f$ then $K(\RR_f)/K$ is a finite Galois
extension and  $\Gal(f)$ is canonically isomorphic to the Galois group
$\Gal(K(\RR_f)/K)$. Let $\Perm_0(\RR_f)$ be the subgroup of $\Perm(\RR_f)$ that
consists of all permutations of the form
 $$(\epsilon, s): 0\mapsto 0, \ \beta_i \mapsto \epsilon(i) \beta_{s(i)}, \ -\beta_i \mapsto -\epsilon(i) \beta_{s(i)}$$
 where $s \in \ST_m$ is an arbitrary permutation of $\{1,2, \dots, m\}$ and
  $\epsilon: \{1,2, \dots, m\} \to \{\pm 1\}$ is an arbitrary function  on $\{1,2, \dots, m\}$ that takes only values $1$ and $-1$.
Clearly,
$$\Gal(f)\subset \Perm_0(\RR_f) \subset \Perm(\RR_f).$$
We write $E_m$ for the subgroup of $\Perm_0(\RR_f) $ that consists of all permutations of the form
$$(\epsilon, \mathrm{id}): 0 \mapsto 0, \ \beta_i \mapsto \epsilon(i) \beta_i, \ -\beta_i \mapsto -\epsilon(i) \beta_i$$
where $\mathrm{id}$ is the identity permutation of $ \{1,2, \dots,m\}$.
Clearly, $E_m\cong (\Z/2\Z)^m$.
Let $\W(\D_m)$ be the index $2$ subgroup of $\Perm_0(\RR_f)$, whose elements are characterized by
the condition
$\prod_{i=1}^m \epsilon(i) =1$. We have
$$\W(\D_m)\subset \Perm_0(\RR_f) \subset \Perm(\RR_f), \quad  \W(\D_m) \cap E_m=E_m^0$$
where $E_m^0$ is an index $2$ subgroup of $E_m$ that consists of all the permutations of the form
$$(\epsilon, \mathrm{id}): 0 \mapsto 0, \ \beta_i \mapsto \epsilon(i) \beta_i, \ -\beta_i \mapsto -\epsilon_i \beta_i$$
with $\prod_{i=1}^m \epsilon(i) =1$.

Notice that $\W(\D_m)$ is the semi-direct product of its normal group
$E_m^0\cong (\Z/2\Z)^{m-1}$ and the subgroup $\tilde{\ST}_m$. Here
 and
$\tilde{\ST}_m$ is identified with the subgroup of all permutation of the form
$$(\mathbf{1},s): 0 \mapsto 0, \ \beta_i \mapsto  \beta_{s(i)}, \ -\beta_i \mapsto -\beta_{s(i)}.$$
 and 
$\mathbf{1}$ is the constant function $1$ on  $ \{1,2, \dots,m\}$. 
Direct computations show that
\begin{equation}
\label{adjoint}
(\mathbf{1},s) (\epsilon, \mathrm{id}) (\mathbf{1},s)^{-1}= (\epsilon \circ s^{-1}, \mathrm{id}) 
\end{equation}
for all $s \in \ST_m, \ (\epsilon, \mathrm{id}) \in E_m$. Here $\epsilon \circ s^{-1}$ is the composition
$$i \mapsto \epsilon(s^{-1}i)\in \{\pm 1\} \quad \forall i \in \{1,2, \dots,m\}.$$
This provides $E_m$ with the natural structure of a $\ST_m$-module. If $\mathfrak{G}$ is a subgroup of $\ST_m$
then we may view $E_m$ as the $\mathfrak{G}$-module as well.

Since $0$ is a simple root of $f(x)$, we have $f(x)=x\cdot  h (x)$ where $h(x)$
is an {\sl even} polynomial without repeated roots of  degree $2m=n-1$, whose set of roots $\RR_{h}$
is $\{\pm \beta_1, \dots , \pm \beta_m\}$
where
$$\beta_i \ne 0, \quad \beta_i \ne \pm \beta_j \ \ \forall i \ne j.$$
In particular, $h(0)\ne 0$. Since $h(x)$ is even and without repeated roots,
there is a degree $m$ polynomial $u(x) \in K[x]$ without repeated roots such that
\begin{equation}
\label{u2h}
h(x)=u(x^2), \ \ h(0)=u(0);
\end{equation}
in addition,  the $m$-element set  $\RR_u$  of roots of $u(x)$ coincides with
$$\{\alpha_1=(\pm \beta_1)^2, \dots, \alpha_i=(\pm \beta_i)^2, \dots,  (\pm \beta_m)^2\}.$$
 This implies that 
$$K(\RR_u)\subset K(\RR_h)=K(\RR_f),$$
and $K(\RR_f)/K(\RR_u)$ is a finite Galois extension.
Each $(\epsilon,s) \in \Gal(f)$ acts on $\RR(u)\subset K(\RR_u)$ via
$$\alpha_i=\beta_i^2 \mapsto (\pm \beta_{s(i)})^2=\beta_{s(i)}^2=\alpha_{s(i)},$$
i.e.,
$$(\epsilon,s)(\alpha_i)=\alpha_{s(i)}.$$
This gives an explicit description of 
the natural surjective homomorphism
$$\kappa_u: \Gal(f)=\Gal(h) \twoheadrightarrow \Gal(u)\subset \Perm(\RR_u)=\ST_m$$
induced by the field inclusion $K(\RR_u)\subset K(\RR_h)$. Namely,
\begin{equation}
\label{kappaGal}
\kappa_u((\epsilon,s))=s \quad \forall (\epsilon,s) \in \Gal(h)\subset \Perm_0(\RR_h).
\end{equation}
\end{sect}
\begin{rem}
\label{kerKappa}
Clearly, $\ker(\kappa_u)$ is a normal subgroup of $\Gal(f)$. It follows from  \eqref{kappaGal} that
$\ker(\kappa_u)$ is a subgroup of $E_m$. On the other hand,  i \eqref{adjoint}
combineded with the normality implies that $\ker(\kappa_u)$ is a $\Gal(u)$-submodule of $E_m$.
It follows from the very definition of surjective $\kappa_u$ that its kernel is trivial if and only if 
$K(\RR_u)= K(\RR_h)$. In general, by Galois theory, $\ker(\kappa_u)$ is canonically isomorphic 
to the Galois group $\Gal( K(\RR_h)/ K(\RR_u))$.
\end{rem}

\begin{lem}
\label{evenR}
\begin{itemize}
\item[(i)]
If $r$ is even then $\Gal(f)=\Gal(h)\subset \W(\D_m)$ if and only if $-u(0)$
is a square in $K$.
\item[(ii)]
Suppose that $r$ is even, $-u(0)$
is a square in $K$. and $\Gal(u)$ is either $\ST_m$ or the alternating group $\mathbf{A}_m$.
If $K(\RR_h) \ne K(\RR_u)$ then $\ker(\kappa_u)=E_m^0$. In particular, if 
$\Gal(u)=\ST_m$ then
$$\Gal(f)=\Gal(h)=\W(\D_m).$$
\end{itemize}
\end{lem}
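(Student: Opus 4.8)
The plan is to route both parts through a single element, the product $\gamma:=\prod_{i=1}^{m}\beta_i\in\bar K$ of a fixed set of representatives of the $\pm$-pairs of roots. From the description of $\Perm_0(\RR_f)$ one reads off at once that $(\epsilon,s)\cdot\gamma=\bigl(\prod_{i}\epsilon(i)\bigr)\gamma$ for every $(\epsilon,s)\in\Gal(f)$, so $\gamma$ is $\Gal(K)$-invariant precisely when $\prod_i\epsilon(i)=1$ for all $(\epsilon,s)\in\Gal(f)$, i.e.\ precisely when $\Gal(f)\subset\W(\D_m)$. On the other hand $\gamma^2=\prod_i\beta_i^2=\prod_i\alpha_i$ is a symmetric function of the roots of $u$, hence lies in $K$; writing $u$ as the product over its roots gives $\gamma^2=(-1)^m u(0)$, and since $r$ is even the integer $m=pr-1$ is odd, so $\gamma^2=-u(0)$.

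Part (i) then drops out: as $\gamma^2=-u(0)\in K$, the element $\gamma$ lies in $K$ if and only if $-u(0)$ is a square in $K$, and by the previous paragraph $\gamma\in K$ is equivalent to $\Gal(f)\subset\W(\D_m)$.

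For part (ii) I would first invoke (i): since $-u(0)$ is assumed square and $r$ is even, $\Gal(f)\subset\W(\D_m)$. By Remark~\ref{kerKappa} the subgroup $\ker(\kappa_u)$ lies in $E_m$ and is a $\Gal(u)$-submodule of it; being contained in $\Gal(f)\subset\W(\D_m)$, it lies in $E_m\cap\W(\D_m)=E_m^0$, and it is nonzero precisely because $K(\RR_h)\ne K(\RR_u)$. So $\ker(\kappa_u)$ is a nonzero $\Gal(u)$-submodule of $E_m^0$, and everything reduces to showing that $E_m^0$ is irreducible as a module over $\AT_m$, which is contained in $\Gal(u)$ in both admissible cases. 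Identifying $E_m\cong\F_2^m$ with $\ST_m$ permuting the coordinates via \eqref{adjoint}, the subgroup $E_m^0$ becomes the sum-zero hyperplane. To establish irreducibility I would take any nonzero submodule $N$, choose $0\ne v\in N$ with support $S$ of even cardinality $2k\ge2$, and, when $2k\ge4$, apply a double transposition $\sigma=(i\,l)(j\,j')\in\AT_m$ with $i\in S$, distinct $j,j'\in S\setminus\{i\}$, and $l\notin S$ (such an $l$ exists because $m$ odd forces $2k<m$); then $v+\sigma v$ has support exactly $\{i,l\}$, producing a weight-two vector $e_i+e_l\in N$. As $\AT_m$ is $2$-transitive for $m\ge4$, the submodule $N$ then contains every $e_a+e_b$, and these span $E_m^0$, whence $N=E_m^0$ and $\ker(\kappa_u)=E_m^0$.

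The final assertion is a cardinality count: if $\Gal(u)=\ST_m$ then $|\Gal(f)|=|\ker(\kappa_u)|\cdot|\Gal(u)|=2^{m-1}m!=|\W(\D_m)|$, which together with $\Gal(f)\subset\W(\D_m)$ forces $\Gal(f)=\W(\D_m)$. The step I expect to be the crux is the irreducibility of $E_m^0$ over $\F_2$: it genuinely fails for $m$ even, where the all-ones vector lies in $E_m^0$ and spans a proper nonzero submodule. Thus the hypothesis that $r$ is even — equivalently that $m$ is odd, so the all-ones vector is excluded from $E_m^0$ — is exactly what renders the module simple and makes the argument work.
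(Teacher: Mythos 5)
Your proof is correct, and for part (ii) it follows exactly the paper's skeleton: use (i) to place $\Gal(f)$ inside $\W(\D_m)$, deduce via Remark~\ref{kerKappa} that $\ker(\kappa_u)$ is a nonzero $\Gal(u)$-submodule of $E_m^0$, and conclude by irreducibility of $E_m^0$ as an $\AT_m$-module over $\F_2$; the final identification $\Gal(f)=\W(\D_m)$ by the order count $2^{m-1}\cdot m!$ is the intended (if unstated) conclusion of the paper's argument as well. Where you genuinely diverge is that you supply self-contained proofs of the two ingredients the paper imports by citation. For (i) the paper simply invokes Remark 2.2 of \cite{ZarhinI}; your argument via $\gamma=\prod_i\beta_i$, with $(\epsilon,s)\gamma=\bigl(\prod_i\epsilon(i)\bigr)\gamma$ and $\gamma^2=(-1)^m u(0)=-u(0)$ for $m$ odd, is the standard content of that remark and is correct (it tacitly takes $u$ monic, but so does the lemma's statement). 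For the irreducibility the paper cites Mortimer's result that $E_m^0$ is the heart of the permutation module of $\AT_m$ over $\F_2$; your hands-on proof --- every nonzero vector of the sum-zero hyperplane has even support $2k<m$ since $m$ is odd, a double transposition $(i\,l)(j\,j')$ with $l$ outside the support produces a weight-two vector $e_i+e_l$, and $2$-transitivity of $\AT_m$ then generates all of $E_m^0$ --- is correct and makes transparent why the parity of $m$ (equivalently of $r$) is essential, namely that the all-ones vector is excluded from $E_m^0$ exactly when $m$ is odd. The trade-off is length versus dependence on the literature; mathematically the two routes are equivalent.
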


\begin{proof}
(i) follows readily from Remark 2.2 in \cite{ZarhinI}, since
 $u(0)=h(0)$ by \eqref{u2h}.

Let us prove (ii).  Since $r$ is even, $m$ is odd.
Thanks to (i),
$$\Gal(f)=\Gal(h)\subset\W(\D_m).$$
This implies that $\ker(\kappa_u)\subset E_m^0$. Since
$K(\RR_h) \ne K(\RR_u)$,  $\ker(\kappa_u)\ne \{0\}$.
So, $\ker(\kappa_u)$ is a nonzero $\mathbf{A}_m$-submodule of $E_m^0$.
However, one easily check that $\mathbf{A}_m$-module $E_m^0$  is nothing else
but the heart of the standard permutation representation of $\mathbf{A}_m$ over the
two-element field $\mathbb{F}_2$ \cite{Mortimer}. But this representation is irreducible (ibid).
This implies that $\ker(\kappa_u)= E_m^0$, which ends the proof.
\end{proof}

\begin{defn}
Let $\Perm(\RR_h)$ be the group of permutations of the $2m$-element set
$\RR_h$.
 Let $\GG$ be a permutation subgroup in $\ST_m$. We write $2^m \cdot \GG\subset \Perm(\RR_h)$ for
 the subgroup of all permutations of
the form
$$(s, \epsilon):
\beta_i \mapsto \epsilon(i) \beta_{s(i)}, \ -\beta_i \mapsto -\epsilon(i)
\beta_{s(i)}$$
 where $$s \in \GG, \quad \epsilon: \{1,2, \dots, m\} \to \{\pm 1 \}.$$ We write $2^{m-1} \cdot \GG$ for
 the index two subgroup in $2^m \cdot \GG$, whose elements $(s, \epsilon)$ are characterized by the
 condition $\prod_{i=1}^m \epsilon(i) =1$.
\end{defn}

\begin{ex}[Example 2.5 of \cite{ZarhinI}]
\item[(i)] The group $2^{m-1} \cdot \{1\}$ coincides with the group $E_m^0$ of all
permutations of the form
$$\beta_i \mapsto \epsilon{i} \beta_{i}, \ -\beta_i \mapsto -\epsilon{i}
\beta_{i}$$  where $ \epsilon_i=\pm 1$ while $2^{m-1} \cdot \{1\}$ corresponds
to its index $2$ subgroup, whose elements are characterized by the condition
 $\prod_{i=1}^m \epsilon_i =1$. 
 This is a subgroup of index 2 in $E_m=2^m \cdot \{1\}$.The groups $2^m \cdot \{1\}$ and  $2^{m-1} \cdot
 \{1\}$ are exponent $2$ commutative groups of order $2^m$ and $2^{m-1}$
 respectively.
 \item[(ii)]
 Let us identify $\Perm(\RR_h)$ with the stabilizer of $0$ in $\Perm(\RR_f)$. Then
$2^m\cdot \ST_m$ coincides with $\Perm_0(\RR_f)$ and $2^{m-1}\cdot \ST_m$
coincides with $\W(\D_m)$.
\end{ex}


\begin{rems}[Remark 2.7 of \cite{ZarhinI}]
\label{orbits} Suppose that there exists a  permutation group $\GG \subset
\ST_m$ such that $\Gal(h) =2^{m-1} \cdot \GG$.
 Then:

\begin{itemize}
\item[(i)] 
 If $\GG$ does not contain a normal subgroup of odd index (except
 $\GG$ itself) then $\Gal(h)$ also does not contain a normal subgroup of odd index (except
 $\Gal(h)$ itself).
\item[(ii)]
\begin{enumerate}
\item If $\GG$ is a transitive subgroup of $\ST_m$ then 
$\Gal(h)$ is a
transitive subgroup of $\Perm(\RR_h)$, i.e., $h(x)$ is irreducible over $K$.

 \item
 Suppose that $\GG$ is a doubly transitive subgroup of $\ST_m$ and let
$\GG_1$ is the stabilizer of $1$ in $\GG$. 
Then $\GG_1$ has exactly two orbits
in $\{1, \dots , m\}$: namely, $\{1\}$ and the rest.
 Let $\Gal(h)_1$ be the stabilizer of
$\beta_1$ in $\Gal(h)$. Then one may easily check that $\Gal(h)_1$ has exactly $3$ orbits in
$\RR_h$: namely, $\{\beta_1\}$, $\{-\beta_1\}$ and the rest.
\end{enumerate}
\end{itemize}
\end{rems}

\begin{sect}
{\bf Permutation modules}. Let $V_{{f}}$ be the
 $2m$-dimensional $\F_p$-vector space of functions $$\phi:{\RR}_{f}\to
 \F_p, \ \sum_{\alpha\in {\RR}_{{f}}}\phi(\alpha)=0.$$
 The space $V_{{f}}$ carries the natural structure of Galois module
 induced by the Galois action on ${\RR}_{{f}}$.

 Let $\F_p^{\RR_h}$ be the
$2m$-dimensional $\F_p$-vector space of all functions $\phi:{\RR}_{h}\to
 \F_p$.
  It carries the natural structure of a Galois module. We write $1_{\RR_h}$ for the
  (Galois-invariant) constant function $1$.

  The map that assigns
 to a $\F_p$-valued function on $\RR_f$ its restriction to $\RR_h$ gives rise to
 the isomorphism $V_f \to \F_p^{\RR_h}$ of Galois modules. (One may extend a
 function $\phi$ on $\RR_h$ to $\RR_f=\{0\}\cup \RR_h$ by putting
 $$\phi(0):=-\sum_{\alpha\in \RR_h}\phi(\alpha).)$$
 The Galois module $V_{{f}}$ splits into a
direct sum of the Galois submodules of odd and even functions
$$V_{{f}}=V_{{f}}^{-}\oplus V_{{f}}^{+}$$
where
$$V_{{f}}^{+}=\{\phi:{\RR}_{f}\to \F_p, \sum_{\alpha \in \RR_f} \phi(\alpha)=0, \ \phi(\alpha)= \phi(-\alpha) \ \forall \alpha\},$$
$$V_{{f}}^{-}=\{\phi:{\RR}_{f}\to \F_p, \ \phi(\alpha)= -\phi(-\alpha) \ \forall \alpha\}.$$
(The sum of values of an odd function is always zero.) Clearly,
 $\phi(0)=0$ for all $\phi \in V_{{f}}^{-}$. It follows that
$$\dim_{\F_p}(V_{{f}}^{-})=m.$$
\end{sect}

\begin{lem}
\label{centralizer} Suppose that  there exists a doubly transitive permutation
group $\GG \subset \ST_m$ such that $\Gal(h)=2^{m-1}\cdot\GG$.
 Then $\End_{\Gal(K)}(V_{{f}}^{-})=\F_p$.
\end{lem}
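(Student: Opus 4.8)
The plan is to realize $V_{f}^{-}$ explicitly as a \emph{signed permutation module} and then to exploit the fact that the diagonal $2$-group $E_m^0$ lies inside $\Gal(h)$. For each $i\in\{1,\dots,m\}$ let $e_i\in V_{f}^{-}$ be the odd function taking the value $1$ at $\beta_i$ (hence $-1$ at $-\beta_i$) and $0$ at every other root; then $e_1,\dots,e_m$ is an $\F_p$-basis of $V_{f}^{-}$. Using the action $(g\phi)(\alpha)=\phi(g^{-1}\alpha)$ together with $g(\beta_i)=\epsilon(i)\beta_{s(i)}$, a direct check shows that an element $(\epsilon,s)\in\Gal(h)$ acts by
\[
(\epsilon,s)\cdot e_i=\epsilon(i)\,e_{s(i)}.
\]
In particular $E_m^0$ acts by the diagonal sign changes $e_i\mapsto\epsilon(i)e_i$, while the permutations $(\mathbf{1},s)$ with $s\in\GG$ act by $e_i\mapsto e_{s(i)}$. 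Since the Galois action on $V_{f}^{-}$ factors through its image $\Gal(f)=\Gal(h)=2^{m-1}\cdot\GG$, it suffices to prove $\End_{\Gal(h)}(V_{f}^{-})=\F_p$.

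First I would restrict to $E_m^0$. Each line $\F_p e_i$ is $E_m^0$-stable and affords the character $\chi_i\colon(\epsilon,\mathrm{id})\mapsto\epsilon(i)$. Because $p$ is odd, the values $\pm1$ are distinct in $\F_p$ and $|E_m^0|=2^{m-1}$ is prime to $p$, so Maschke's theorem applies and $V_{f}^{-}$ is $E_m^0$-semisimple. The crux is that the restricted characters $\chi_1,\dots,\chi_m$ of $E_m^0$ are pairwise distinct. On the full group $E_m$ one has $\chi_i\chi_j=\mathbf{1}$ only when $i=j$, and the only nontrivial character of $E_m$ vanishing on $E_m^0$ is $\prod_{k=1}^m\chi_k$; since $\chi_i\chi_j=\prod_{k=1}^m\chi_k$ would force $\prod_{k\ne i,j}\chi_k=\mathbf{1}$, an impossibility for $m\ge 3$ (the coordinate characters of $E_m$ are independent), the characters remain distinct after restriction to $E_m^0$. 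Hence the $E_m^0$-isotypic decomposition of $V_{f}^{-}$ is exactly $\bigoplus_{i=1}^m\F_p e_i$, with each line a distinct isotypic component.

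Now take $\psi\in\End_{\Gal(h)}(V_{f}^{-})$. Commuting with $E_m^0$, the map $\psi$ preserves each isotypic component, i.e.\ each line $\F_p e_i$, so $\psi(e_i)=c_i e_i$ for scalars $c_i\in\F_p$. Commuting next with the permutations $(\mathbf{1},s)$ for $s\in\GG$ yields
\[
c_{s(i)}\,e_{s(i)}=\psi(e_{s(i)})=(\mathbf{1},s)\,\psi(e_i)=c_i\,e_{s(i)},
\]
so $c_{s(i)}=c_i$ for all $s\in\GG$. Since $\GG$ is doubly transitive, hence transitive, on $\{1,\dots,m\}$, all the $c_i$ coincide, so $\psi$ is scalar and $\End_{\Gal(K)}(V_{f}^{-})=\F_p$.

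I expect the main obstacle to be precisely the distinctness of the characters $\chi_i$ after restriction from $E_m$ to its index-two subgroup $E_m^0$: this is where the hypotheses $p>2$ and $m\ge 3$ (here $m=pr-1\ge 5$) genuinely enter, and it is the only place the argument could break down for small $m$. Everything else is formal, and in fact only transitivity of $\GG$ is used, so the doubly transitive hypothesis is stronger than what this particular statement requires.
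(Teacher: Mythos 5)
Your proof is correct, but it follows a genuinely different route from the paper's. The paper works with the full permutation module $\F_p^{\RR_h}$: it splits it as $W_h^{-}\oplus \F_p\cdot 1_{\RR_h}\oplus W_h^{+,0}$, invokes Passman's orbit-counting lemma to get $\dim_{\F_p}\End_{\Gal(h)}(\F_p^{\RR_h})=3$ from the fact that the stabilizer of a root has exactly three orbits on $\RR_h$ (namely $\{\beta_1\}$, $\{-\beta_1\}$ and the rest --- this is precisely where double transitivity of $\GG$ enters), and then squeezes $\dim_{\F_p}\End_{\Gal(h)}(W_h^{-})\le 1$ from the resulting inequality. You instead diagonalize $V_f^{-}$ under the elementary abelian subgroup $E_m^0\subset 2^{m-1}\cdot\GG$: the signed basis $e_1,\dots,e_m$ consists of simultaneous eigenvectors for the pairwise distinct characters $\chi_i|_{E_m^0}$ (distinctness holding because $m\ge 3$ and the only nontrivial character of $E_m$ trivial on $E_m^0$ is $\prod_k\chi_k$), so any $E_m^0$-equivariant endomorphism is diagonal, and commutation with the lifted permutations $(\mathbf{1},s)$, $s\in\GG$, forces all diagonal entries to agree by transitivity. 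Your observation that only transitivity of $\GG$ is used is accurate and slightly strengthens the lemma; note also that once you have a basis of eigenvectors with distinct characters you do not even need Maschke --- writing $\psi(e_i)=\sum_j c_{ij}e_j$ and comparing characters kills the off-diagonal terms directly. The paper's approach is less hands-on but generalizes more readily to situations where no large abelian normal subgroup such as $E_m^0$ is available inside the Galois group; yours is more elementary and isolates exactly which part of the hypothesis the conclusion depends on.
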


\begin{proof}
By Remark \ref{orbits}(ii)(1), $\Gal(h)$ acts transitively on $\RR_h$.
 Let $W_h^{+}$ and $W_h^{-}$ be the subspaces of even and odd
functions respectively in $\F_p^{\RR_h}$. Clearly, they both are Galois
submodules of $\F_p^{\RR_h}$, and 
$$W_h^{-}\oplus W_h^{+} =\F_p^{\RR_h}.$$ It
is also clear that the Galois modules $W_h^{-}$ and $V_{{f}}^{-}$ are
isomorphic. So, it suffices to check that
$$\End_{\Gal(K)}(W_{{h}}^{-})=\F_p.$$
In order to do that, notice that $\#(\RR_h)=2m=n-1=2pr-2$ is {\sl not}
divisible by $p$. This implies that the submodule $\F_p \cdot 1_{\RR_h}$ of constant functions is a
direct summand of $W_h^{+}$ and therefore $\F_p^{\RR_h}$ splits into a direct sum of
Galois modules
$$\F_p^{\RR_h}=W_h^{-}\oplus W_h^{+}=W_h^{-}\oplus \F_p \cdot 1_{\RR_h}
\oplus W_h^{+,0}$$ where $W_h^{+,0}$ is the Galois (sub)module of even
functions, whose sum of values is zero. Clearly,
$$\dim_{\F_p}\End_{\Gal(K)} (\F_p^{\RR_h})\ge $$
$$\dim_{\F_p}\End_{\Gal(K)}(W_h^{-}) + \dim_{\F_p}\End_{\Gal(K)}(\F_p \cdot
1_{\RR_h}) + \dim_{\F_p}\End_{\Gal(K)}(W_h^{+,0}) \ge $$
$$\dim_{\F_p}\End_{\Gal(K)}(W_h^{-}) +1 +1.$$ So, if we prove that
$\dim_{\F_p}\End_{\Gal(K)} (\F_p^{\RR_h})=3$, then we are done. Since the image
of $\Gal(K)$ in $\Aut_{\F_p}(\F_p^{\RR_h})$ coincides with
$$\Gal(h)\subset \Perm(\RR_h)\subset \Aut_{\F_p}(\F_p^{\RR_h}),$$
we have
$$\End_{\Gal(K)} (\F_p^{\RR_h})=\End_{\Gal(h)} (\F_p^{\RR_h}).$$
So, in order to prove the Lemma, it suffices to check that
$$\dim_{\F_p}(\End_{\Gal(h)} (\F_p^{\RR_h}))=3.$$
By Lemma 7.1 of \cite{Passman}, $\dim_{\F_p}(\End_{\Gal(h)} (\F_p^{\RR_h}))$
coincides with the number of orbits in $\RR_h$ of the stabilizer  in $\Gal(h)$
of any root of $h(x)$. But the number of orbits is $3$ (see Remark
\ref{orbits}(ii)(2)). This ends the proof.
\end{proof}

\section{Cyclic covers, jacobians and prymians}
\label{geom}

If $X$ is an abelian variety over $\bar{K}$ then we write $\End(X)$ for the
ring of its $\bar{K}$-endomorphisms and $\End^0(X)$ for the
corresponding $\Q$-algebra $\End(X)\otimes\Q$. If $X$ is defined over $K$ then
we write $\End_K(X)$ for the ring of its $K$-endomorphisms.

As above $f(x)=x\cdot h(x)\in K[x]$ is an odd polynomial of degree $n=2m+1=2pr-1$
without repeated roots. We keep all the notation of the previous Section.

\begin{sect}
{\bf Superelliptic curves}. Hereafter we assume that $K$  contains $\zeta_p$. Let
us consider the smooth projective model $C_{f,p}$ of the plane affine
 curve
$y^p=f(x)$; the genus of $C_{f,p}$ is $(n-1)(p-1)/2=m(p-1)$.
 The curve $C_{f,p}$ admits commuting periodic
automorphisms
$$\delta_2:(x,y)\mapsto (-x,-y)$$
and
$$\delta_p:(x,y)\mapsto (x,\zeta_p y)$$
of period $2$ and $p$ respectively.

The regular map of curves
$$\pi:C_{f,p}\to \P^1, \ (x,y)\mapsto x$$
has degree $p$ and branches exactly at $0$, the $2m$-element set 
$\{\alpha\mid \alpha\in \RR_f\}$ and $\infty$. (Notice that $p$ does {\sl not}
divide $2m+1=n$.) Clearly, all  branch points of $\pi$ in $C_{f,p}$ are
$\delta_p$-invariant. By abuse of notation, we denote $\pi^{-1}(\infty)$ by
$\infty$. Let us put
$$B=\pi^{-1}(\RR_f)=\{(\alpha,0)\mid \alpha\in \RR_f\}\subset C_{f,p}(\bar{K}).$$
Clearly, all elements of $B$ are $\delta_p$-invariant. On the other hand, if
$P= (\alpha,0)\in B$ then $\delta_2(P)=(-\alpha,0)\in B$.

The automorphism $\delta_2:C_{f,p} \to C_{f,p}$ has exactly two fixed points,
namely, $\pi^{-1}(0)$ and $\pi^{-1}(\infty)$. One may easily check that the
quotient $\tilde{C}_{f,p}=C_{f,p}/(1,\delta_2)$ is a smooth (irreducible)
projective curve (compare with Lemma 1.2, its proof and Corollary 1.3 in
\cite{ZarhinMF}) and $C_{f,p}\to \tilde{C}_{f,p}$ is a double covering that is
ramified at exactly two points, namely $\pi^{-1}(0)$ and
$\pi^{-1}(\infty)$. The Hurwitz formula implies that the genus of
$\tilde{C}_{f,p}$ is $m(p-1)/2$.

It follows from (\cite{ZarhinCrelle}, \cite[Remarks 3.5 and 3.7]{ZarhinCamb}) that the 
$(n-1)(p-1)$-dimensional
$\bar{K}$-vector space $\Omega^1(C_{f,p})$ of differentials of the first kind
on $C_{f,p}$ has a basis
$$\left\{ x^i \frac{dx}{y^j}, \ \
1 \le j \le p-1, \quad 0 \le i \le [nj/p]-1\right\}.$$ 
If
$$\delta_2^{*}:\Omega^1(C_{f,p})\to \Omega^1(C_{f,p}), \ \delta_p^{*}:\Omega^1(C_{f,p})\to
\Omega^1(C_{f,p})$$ are the automorphisms induced by $\delta_2$ and $\delta_p$
respectively then
$$\delta_p^{*}(x^i \frac{dx}{y^j})=\zeta_p^{-j}x^i \frac{dx}{y^j},$$

$$\delta_2^{*}(x^i \frac{dx}{y^j})=(-1)^{i+1-j} x^i \frac{dx}{y^j}.$$ 
In particular, the
 basis consists of eigenvectors with respect to both $\delta_2^{*}$ and $\delta_p^{*}$.
It follows that the subspace $\Omega^1(C_{f,p})^{-}$ of
$\delta_2$-anti-invariants is $m$-dimensional and admits a basis
$$\left\{ x^{i} \frac{dx}{y^j}, \ 1 \le j \le p-1, \ 0\le i \le [nj/p]-1; \quad  i+1+j \ \ \text{is odd} \right\}.$$
Taking into account that $n=2pr-1$, we get
$$[nj/p]-1=[(2prj-j)/p]-1=(2rj-1)-1=2rj-2$$
(recall that $1 \le j \le p-1$). This means that the basis of $\Omega^1(C_{f,p})^{-}$ consists of forms
$$\left\{ x^{i} \frac{dx}{y^j}, \ 1 \le j \le p-1, \ 0\le i \le 2rj-2; \quad  i, j \ \ \text{have the same parity} \right\}.$$
It follows that our basis partitions in $(p-1)$ parts $B_j$ indexed by $j$
where for {\sl even} $j$
$$B_j=\left\{ x^{i} \frac{dx}{y^j}, \ \
i=0,2, \dots, 2rj-2\right\}$$
is a $rj$-element set while for {\sl odd} $j$
$$B_j=\left\{ x^{i} \frac{dx}{y^j}, \ \
i=1,3, \dots, 2rj-3\right\}$$
is a $(rj-1)$-element set. In particular, the cardinalities of $B_j$ are distinct for different $j$ (recall that $r \ge 2$).

\end{sect}

\begin{sect}
{\bf Superelliptic jacobians}. Let $J(C_{f,p})$ be the jacobian of $C_{f,p}$: it is
a $m(p-1)$-dimensional abelian variety that is defined over $K$. By Albanese
functoriality, $\delta_2$ and $\delta_p$ induce the $K$-automorphisms of
 $J(C_{f,p})$ that we still denote by $\delta_2$ and $\delta_p$ respectively.
 We have
 $$\delta_2^2=1, \ \sum_{i=0}^{p-1} \delta_p^i=0$$
 where all the equalities hold in $\End(J(C_{f,p}))$.
 The latter equality gives rise to the embedding
 $$\Z[\zeta_p] \hookrightarrow \End_K(J(C_{f,p}))\subset \End(J(C_{f,p})), \ \zeta_p \mapsto
 \delta_p$$
 of the cyclotomic ring $\Z[\zeta_p]$ into the  ring of $K$-endomorphisms of $J(C_{f,p})$.

 Let $\alb:C_{f,p} \hookrightarrow J(C_{f,p})$ be the canonical embedding of $C_{f,p}$ into
 its jacobian normalized by the condition $\alb(\infty)=0$, i.e., $\alb$ sends a point
 $Q \in C_{f,p}(\bar{K})$ to the linear equivalence class of the divisor
 $(Q)-(\infty)$.
 
  Recall that $\infty\in C_{f,p}$ is $\delta_2$-invariant and $\delta_p$-invariant; it goes to the  zero $O$ of $J(C_{f,p})$  under $\alb$;
  in particular,
  $$\delta_2(O)=O, \quad \delta_p(O)=O.$$
  Of course, $O$ is invariant under any endomorphism of the abelian variety $J(C_{f,p})$.
 In light of the universality property of the Albanese map \cite[Ch. II, Sect. 2, Th.9]{Lang}, 
 there exist $Q_2, Q_p \in J(C_{f,p})(\bar{K})$ such that for all $P \in J(C_{f,p})(\bar{K})$
 $$\alb \ \delta_2(P)=\delta_2 \ \alb(P)+Q_2, \quad \alb \ \delta_p(P)=\delta_p\ \alb(P)+Q_p.$$
 If we put $P=\infty$ then we get
 $$O=Q_2, \quad O=Q_p.$$
 This means that 
 $$\alb \ \delta_2=\delta_2 \ \alb, \quad \alb \ \delta_p=\delta_p\ \alb,$$
 i.e., $\alb$ is $\delta_p$-equivariant and
 $\delta_2$-equivariant.

 Let us remind the description of the Galois (sub)module $J(C_{f,p})^{\delta_p}$ of
 $\delta_p$-invariants in $J(C_{f,p})(\bar{K})$.  The Galois modules
$V_{{f}}$ and $J(C_{f,p})^{\delta_p}$ are canonically isomorphic
\cite{Schaefer} (see also \cite{ZarhinMiami}). Namely, let
$$\Z_B^0=\{\sum_{P\in B}a_P(P)\mid a_P\in \Z \ \forall P \in B, \ \sum_{P\in B}a_P=0 \}$$
be the group of degree zero divisors on $C_{f,p}$ with support in $B$. The free
commutative group $\Z_B^0$ carries the natural structure of a Galois module.
Clearly, the Galois module $\Z_B^0/p \Z_B^0$ is canonically isomorphic to
$V_{{f}}$: a divisor $\sum_{P\in B}a_P(P)$ gives rise tho the function $\alpha
\mapsto a_P \mod p$ where $P=(\alpha,0)$. Since $B$ is $\delta_2$-invariant,
the map
$$D_2: \sum_{P\in B}a_P(P)\mapsto \sum_{P\in B}a_P \delta_2(P)$$
is an automorphism of  the Galois module $\Z_B^0$ that induces the
automorphism of $\Z_B^0/p \Z_B^0=V_{{f}}$ that sends a function $\alpha \to
\phi(\alpha)$ to the function $\alpha \to \phi(-\alpha)$. (We still denote this
automorphism of $V_{{f}}$ by $D_2$.)  Notice that
$$V_{{f}}^{-}=(1-D_2)V_{{f}}, \ V_{{f}}^{+}=(1+D_2)V_{{f}}$$
(recall that $V_f$ is a $\F_p$-vector space and $p$ is odd.) In other words, $V_{{f}}^{+}$
and $V_{{f}}^{-}$ are eigenspaces of $D_2$ that correspond to eigenvalues $1$
and $-1$ respectively.

Let us consider the natural map
$$\cl: \Z_B^0 \to J(C_{f,p})(\bar{K})$$
that sends a divisor $\sum_{P\in B}a_P(P)$ to (its linear equivalence class,
i.e., to)
 $$\sum_{P\in B}a_P \alb(P) \in J(C_{f,p})(\bar{K}).$$ 
 It turns out that
$\cl(\Z_B^0)=J(C_{f,p})^{\delta_p}$ and the kernel of $\cl$ coincides with
$p\cdot \Z_B^0$. (If $b_1, b_2 \in B$ then the  degree $p$ divisors $p(b_1)$ and $p(b_2)$
are linear equivalent, since the both are pulled back from $\mathbb{P}^1$ via $\pi$).
This gives rise to the natural isomorphism of Galois module
$\Z_B^0/p \Z_B^0$ and $J(C_{f,p})^{\delta_p}$ and we get the natural
isomorphisms of Galois modules
$$\overline{\cl}: V_{{f}}=\Z_B^0/p \Z_B^0\cong J(C_{f,p})^{\delta_p}.$$
Since $\delta_2$ commutes with $\delta_p$, the Galois submodule
$J(C_{f,p})^{\delta_p}$ is $\delta_2$-invariant. It follows from the explicit
description of $\cl$ and $D_2$ that if $\bar{\cl}(\phi)=P \in
J(C_{f,p})^{\delta_p}$ then $\delta_2 (P)$ is the image (under $\overline{\cl}$)
of the function $\alpha \to \phi(-\alpha)$. In other words,
$$\overline{\cl}(D_2 \phi)=\delta_2 \left(\overline{\cl}(\phi) \right)\ \forall \phi \in
V_f.$$ It follows that the restriction of $\overline{\cl}$ to $V_{{f}}^{-}$
gives us the isomorphism of Galois modules
$$\overline{\cl}: V_{{f}}^{-} \cong \{P \in J(C_{f,p})^{\delta_p}\mid
\delta_2 (P)=-P\}.$$ This implies that
$$\{P \in J(C_{f,p})^{\delta_p}\mid
\delta_2(P)=-P\}=\overline{\cl}(V_{f}^{-})=\overline{\cl}((1-D_2)V_{f})
 =(1-\delta_2)J(C_{f,p})^{\delta_p}.$$
\end{sect}

\begin{sect}
\label{Triprym} {\bf Superelliptic prymians}.
 Let us consider the Prym variety
 $$\mathrm{Prym}(C_{f,p})=(1-\delta_2)J(C_{f,p})\subset J(C_{f,p}).$$
 If one restricts the canonical principal polarization on $J(C_{f,p})$ to
$\mathrm{Prym}(C_{f,p})$ then the induced polarization is twice a principal polarization on
$\mathrm{Prym}(C_{f,p})$ \cite[Sect. 3, Cor. 2]{MumfordP}.
Obviously, the principal polarization on
$\mathrm{Prym}(C_{f,p})$ is $\delta_p$-invariant.
It is also clear \cite[Sect. 3,
Cor. 2]{MumfordP} that $\mathrm{Prym}(C_{f,p})$ coincides with the identity component of
the surjective map of jacobians $J(C_{f,p})\to J(\tilde{C}_{f,p})$; in
particular, it is a $m(p-1)/2$-dimensional abelian variety that is defined over $K$.
Clearly, the abelian subvariety $\mathrm{Prym}(C_{f,p})$ is $\delta_p$-invariant. Therefore
we may and will consider the restriction of $\delta_p$ to $\mathrm{Prym}(C_{f,p})$ as the $K$-automorphism $\delta_{p,-}$ of $\mathrm{Prym}(C_{f,p})$. Still
$\sum_{i=0}^{p-1}\delta_{p,-}^i=0$
 in $\End(\mathrm{Prym}(C_{f,p}))$. As above, this induces an
embedding $$\Z[\zeta_p] \hookrightarrow \End(\mathrm{Prym}(C_{f,p})), \ \zeta_p \mapsto
 \delta_{p,-}.$$
On the other hand, $1+\delta_2$ kills $\mathrm{Prym}(C_{f,p})$, because
$$0=1-\delta_2^2=(1+\delta_2)(1-\delta_2)\in \End(J(C_{f,p}))$$ and
$\mathrm{Prym}(C_{f,p})(\bar{K})=(1-\delta_2)(J(C_{f,p}))$. This implies that
$$\delta_2 (P) = -P \ \forall P \in \mathrm{Prym}(C_{f,p})(\bar{K}).$$
Let us consider the Galois (sub)module $\mathrm{Prym}(C_{f,p})^{\delta_{p,-}}$ of
$\delta_{p,-}$-invariants in  $\mathrm{Prym}(C_{f,p})(\bar{K})$. Clearly,
$$\mathrm{Prym}(C_{f,p})^{\delta_{p,-}}\subset \{P \in J(C_{f,p})^{\delta_p}\mid
\delta_2(P)=-P\}.$$ Since the latter group coincides with
$(1-\delta_2)J(C_{f,p})^{\delta_p}$, we conclude that
$$\mathrm{Prym}(C_{f,p})^{\delta_{p,-}}= \{P \in J(C_{f,p})^{\delta_p}\mid
\delta_2(P)=-P\}.$$
It follows that the Galois modules $\mathrm{Prym}(C_{f,p})^{\delta_{p,-}}$
and $V_f^{-}$ are canonically isomorphic.

Let us put $$\Oc=\Z[\zeta_p], \ \lambda=(1-\zeta_p)\Oc, \ \
E=\Oc\otimes\Q=\Q(\zeta_p).$$ Then the residue field
$$\Oc/\lambda=\F_p.$$  Recall that we have the natural homomorphism
$$\Oc=\Z[\zeta_p]\hookrightarrow \End_K(\mathrm{Prym}(C_{f,p})), \ \zeta_p\mapsto \delta_p.$$
This implies that
$$\mathrm{Prym}(C_{f,p})^{\delta_{p,-}}=\mathrm{Prym}(C_{f,p})_{\lambda}$$
and therefore the Galois modules  $\mathrm{Prym}(C_{f,p})_{\lambda}$ and $V_{f}^{-}$ are
canonically isomorphic. In particular,
$$\dim_{\F_p}(\mathrm{Prym}(C_{f,p})_{\lambda})=m.$$
On the other hand, it is well known \cite{Shimura,Ribet2,ZarhinMZ} that $\mathrm{Prym}(C_{f,p})_{\lambda}$ is a free
$\Oc/\lambda$-module of rank $2\dim(\mathrm{Prym}(C_{f,p}))/[E:\Q]$. Since
$\Oc/\lambda=\F_p$ and $[E:\Q]=p-1$, we get another proof of the equality $\dim(\mathrm{Prym}(C_{f,p}))=m(p-1)/2$.
Notice that
 $$\dim(\mathrm{Prym}(C_{f,p}))=m(p-1)/2=\dim_{\C}(\Omega^1(C_{f,p})^{-}).$$
\end{sect}

\begin{rem}
\label{multprime}
 Taking into account that $\dim(\mathrm{Prym}(C_{f,p}))=m(p-1)/2$ and applying Theorem 3.10 of \cite{ZarhinMiami} to
$$ Y=J(C_{f,p}), \  \ Z=\mathrm{Prym}(C_{f,p}), \ \ \delta=\delta_2, \ \mathcal{P}(t)=1-t,$$
we obtain that $$1-\delta_2:J(C_{f,p})\twoheadrightarrow \mathrm{Prym}(C_{f,p})\subset
J(C_{f,p})$$ induces (on differentials of the first kind) an isomorphism
$$(1-\delta_2)^{*}:\Omega^1(\mathrm{Prym}(C_{f,p}))\cong \Omega^1(C_{f,p})^{-}\subset
\Omega^1(J(C_{f,p}))$$
 and this isomorphism is $\delta_p$-equivariant. It
follows easily from considerations of Subsection \ref{Triprym} that
the spectrum of the linear operator 
$$\delta_{p,-}^{*}:
\Omega^1(\mathrm{Prym}(C_{f,p})) \to \Omega^1(\mathrm{Prym}(C_{f,p}))$$
coincides with the set $\{\zeta_p^{-j} \mid 1 \le j \le p-1\}$ of primitive $p$th roots of unity; in addition the multiplicity $\mathrm{mult}(\zeta_p^{-j})$
of eigenvalue $\zeta_p^{-j}$ is $rj$ if $j$ is even and $rj-1$ if $j$ is odd.
It follows that GCD of all these multiplicities is $1$ if and only if $r$ is even. Notice also that all these multiplicities are distinct if $r$ is even.

\end{rem}

\begin{thm}
\label{main}
 Assume that $r$ is even and there exists a doubly transitive permutation group $\GG \subset
 \ST_m$ that enjoys the following properties:

 \begin{itemize}
 \item[(i)]
 $\GG$ does not contain a normal subgroup, whose index divides $m$ (except $\GG$
 itself).
\item[(ii)] $\Gal(h)$ contains $2^{m-1}\cdot \GG$.
 \end{itemize}	
 Then $\End(\mathrm{Prym}(C_{f,p}))=\Z[\delta_{p,-}] \cong\Z[\zeta_p]$.
 In particular, $\mathrm{Prym}(C_{f,p})$ is an absolutely simple abelian variety.
\end{thm}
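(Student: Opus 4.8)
The plan is to show that $\End^0(\mathrm{Prym}(C_{f,p}))=E:=\Q(\zeta_p)$; since $\Z[\zeta_p]$ is the maximal order of $E$ and every order containing it coincides with it, this yields $\End(\mathrm{Prym}(C_{f,p}))=\Z[\delta_{p,-}]\cong\Z[\zeta_p]$, and absolute simplicity is then automatic because $E$ is a field. I would attack $\End^0$ through the \emph{centralizer} $C:=\End^0_E(\mathrm{Prym}(C_{f,p}))$ of $E$ and its reduction modulo $\lambda$ on the $m$-dimensional $\F_p$-space $\mathrm{Prym}(C_{f,p})_\lambda$, which by Subsection \ref{Triprym} is canonically $V_f^{-}$.

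First I would record the residual absolute simplicity and upgrade it. By Lemma \ref{centralizer} applied to the subgroup $H:=2^{m-1}\cdot\GG\subseteq\Gal(h)$ (legitimate by hypothesis (ii), as that lemma is proved assuming $\Gal(h)=2^{m-1}\cdot\GG$), the module $V_f^{-}$ is absolutely simple over $H$, i.e.\ $\End_H(V_f^{-})=\F_p$. The key step is to promote this to \emph{very simplicity}: every $H$-stable $\F_p$-subalgebra of $\End_{\F_p}(V_f^{-})$ equals $\F_p$ or all of $M_m(\F_p)$. Here hypothesis (i) enters. Since $r$ is even, $m=pr-1$ is odd, so an index dividing $m$ is odd; arguing as in Remarks \ref{orbits}(i) (a normal subgroup of odd index contains the $2$-group $E_m^0$ and hence descends to $\GG$), condition (i) transfers from $\GG$ to $H$, so $H$ has no proper normal subgroup of index dividing $m=\dim_{\F_p}V_f^{-}$. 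Combined with the double transitivity of $\GG$, the criterion for very simple $\F_p[H]$-modules (see \cite{ZarhinCamb}) then applies. \textbf{This verification is the main obstacle}, being the only place where the delicate Clifford-theoretic input of condition (i) is used. Because the actual Galois image $G$ satisfies $G\supseteq H$, any $G$-stable subalgebra is a fortiori $H$-stable, hence trivial or full.

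Next I would exploit this dichotomy. Since $E$ is defined over $K$, Galois conjugation on endomorphisms preserves $C$, so the reduction $\bar C\subseteq\End_{\F_p}(\mathrm{Prym}(C_{f,p})_\lambda)=M_m(\F_p)$ is $G$-stable; thus $\bar C=\F_p$ or $\bar C=M_m(\F_p)$. I would rule out the second case by a dimension count plus the multiplicities. If $\bar C=M_m(\F_p)$, then Nakayama's lemma over the complete local ring $\Oc_\lambda$ (applied to $C\otimes_\Oc\Oc_\lambda$ acting faithfully on the free rank-$m$ Tate module $T_\lambda$) gives $C\otimes_\Oc\Oc_\lambda=M_m(\Oc_\lambda)$, whence $\dim_\Q C=m^2(p-1)=\dim_\Q\End_E(H_1(\mathrm{Prym}(C_{f,p}),\Q))$; so $C$ is the full $E$-linear endomorphism algebra, and its diagonal idempotents produce an $E$-equivariant isogeny $\mathrm{Prym}(C_{f,p})\sim Z^m$ with $\dim Z=(p-1)/2$. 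Then every eigenvalue multiplicity of $\delta_{p,-}^{*}$ is $m$ times that of $Z$, hence divisible by $m\ge5$, contradicting the fact (Remark \ref{multprime}, using $r$ even) that these multiplicities have $\gcd=1$.

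It remains to treat $\bar C=\F_p$. The same Nakayama argument now gives $C\otimes_\Oc\Oc_\lambda=\Oc_\lambda$, so $\dim_\Q C=p-1=[E:\Q]$ and therefore $C=E$: the centralizer of $E$ is $E$ itself. A field centralizer forces $\End^0(\mathrm{Prym}(C_{f,p}))$ to be a division algebra $\Delta$ in which $E$ is a maximal subfield, with center $F\subseteq E$ and $E/F$ abelian. By Shimura's description of the analytic representation, the eigenvalue multiplicities of $\delta_{p,-}^{*}$ are constant on the $\Gal(E/F)$-orbits of the embeddings of $E$ into $\C$; but by Remark \ref{multprime} (with $r$ even) these multiplicities are pairwise \emph{distinct}, so $\Gal(E/F)$ is trivial, $F=E$, and $\Delta=E$. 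Hence $\End^0(\mathrm{Prym}(C_{f,p}))=E$, which completes the argument.
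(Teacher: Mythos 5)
Your overall architecture (reduce to the centralizer of $E=\Q(\zeta_p)$, compare its reduction modulo $\lambda$ with the Galois module $V_f^{-}$, and use the eigenvalue multiplicities of $\delta_{p,-}^{*}$ twice --- coprimality to kill power/product degenerations, distinctness to kill a division algebra) is close in spirit to the paper's, which packages these steps into Theorem~6.11 of \cite{ZarhinKumar} and Theorem~2.3 of \cite{ZarhinCamb}. But the step you yourself flag as the main obstacle is in fact false, and the argument does not survive it. The $\F_p[H]$-module $V_f^{-}$ for $H=2^{m-1}\cdot\GG$ is \emph{not} very simple: the normal subgroup $E_m^0$ acts on $V_f^{-}\cong W_h^{-}$ through the $m$ distinct characters $\epsilon\mapsto\epsilon(i)$ (the value of an odd function at $\beta_i$ is an eigenvector), so in that eigenbasis the image of $H$ in $\Aut_{\F_p}(V_f^{-})$ consists of monomial matrices, and the diagonal subalgebra $\F_p^{m}\subset M_m(\F_p)$ is an $H$-stable subalgebra which is neither $\F_p$ nor $M_m(\F_p)$. (Equivalently, $V_f^{-}$ is induced from a one-dimensional representation of the proper subgroup $E_m^0\rtimes\GG_1$, and induced modules are never very simple.) Hypothesis (i) cannot repair this; it rules out certain normal subgroups of small index, but the obstruction here lives inside $E_m^0$ itself. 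Consequently your dichotomy $\bar C\in\{\F_p,\,M_m(\F_p)\}$ is unjustified, and the genuinely dangerous intermediate cases --- e.g.\ $\bar C\cong\F_p^{m}$ or a product of smaller matrix algebras, corresponding to $\mathrm{Prym}(C_{f,p})$ being isogenous to a product of Galois-conjugate, pairwise non-isogenous abelian varieties with $\Q(\zeta_p)$-action --- are exactly the ones your proof never excludes. Note also that without very simplicity you cannot pass from $\End_{\Gal(K)}(V_f^{-})=\F_p$ to any constraint on $\bar C$ at all, since $\bar C$ is only Galois-\emph{stable}, not Galois-fixed.

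For comparison, the paper does not use very simplicity anywhere. It feeds $\End_{\Gal(K)}(\mathrm{Prym}(C_{f,p})_{\lambda})=\F_p$ (Lemma~\ref{centralizer}) together with the coprimality of the multiplicities $\mathrm{mult}(\zeta_p^{-j})$ (Remark~\ref{multprime}, where evenness of $r$ enters) into Theorem~6.11 of \cite{ZarhinKumar} to conclude that $\Q[\delta_{p,-}]$ is its own centralizer --- that theorem is precisely where the ``isogenous to a product'' degenerations are eliminated, essentially because any such splitting would force a common divisor $>1$ of all the multiplicities --- and then uses Theorem~2.3 of \cite{ZarhinCamb} with the \emph{distinctness} of the multiplicities to show $\Q[\delta_{p,-}]$ equals the center, hence all of $\End^0$. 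Your final two cases ($\bar C=M_m(\F_p)$ and $\bar C=\F_p$) reproduce the two ends of that argument correctly; what is missing is everything in between, and the tool you propose for excluding it does not exist for this module.
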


\begin{proof}
Enlarging $K$ if necessary, we may and will assume that $\Gal(h)= 2^{m-1}\cdot
\GG$. Identifying $\Perm(\RR_h)$ with the stabilizer of $0$ in $\Perm(\RR_f)$,
we obtain that
$$\Gal(f)=\Gal(h)=2^{m-1}\cdot\GG.$$
Since the Galois modules  $\mathrm{Prym}(C_{f,p})_{\lambda}$ and $V_f^{-}$ are isomorphic,
it follows from Lemma \ref{centralizer} that
$\End_{\Gal(K)}(\mathrm{Prym}(C_{f,p})_{\lambda})=\F_p$.
 Applying Remark \ref{multprime} and 
 Theorem 6.11 of \cite{ZarhinKumar}
$$X=\mathrm{Prym}(C_{f,p}), \ E=\Q(\zeta_p), \
\Oc=\Z[\zeta_p], \ \lambda=(1-\zeta_p)\Oc,$$
we obtain that $\Z[\delta_{p,-}]$ coincides with its own centralizer in 
$\End(\mathrm{Prym}(C_{f,p}))$ and the $\Q$-subalgebra $\Q[\delta_{p,-}]$ of
$\End^0(\mathrm{Prym}(C_{f,p}))$ coincides with its own centralizer. In particular, $\Q[\delta_{p,-}]$ contains the center $\mathcal{Z}$ of 
$\End^0(\mathrm{Prym}(C_{f,p}))$. On the other hand, it follows from Theorem 2.3 of \cite{ZarhinCamb} applied to $Z=\mathrm{Prym}(C_{f,p})$ and $E=
\Q[\delta_{p,-}]$ that  if $\Q[\delta_{p,-}]$ does {\sl not}
coincides with the center then all the multiplicities $\mathrm{mult}(\zeta_p^{-j})$ could {\sl not} be distinct. However, they all are distinct! This implies that
$\Q[\delta_{p,-}]$ coincides with the center and therefore also coincides with the whole $\End^0(\mathrm{Prym}(C_{f,p}))$. It follows that
$\Z[\delta_{p,-}]=\End(\mathrm{Prym}(C_{f,p}))$, which ends the proof.
\end{proof}

\begin{ex}	
Let $\bar{\Q}$ be the algebraic closure of $\Q$ in $\C$.
Let $p$ be an odd prime, $r \ge 2$ be an integer such that $(r-1)$ and $(p-1)$ are relatively prime. Let us put
$$m:=pr-1, \ n=2m+1=2pr-1 \ge 5.$$
Clearly, $m \ge 3 \cdot 2-1=5$.

  Let $L$ be the field of rational functions $\bar{\Q}(t_1, \dots , t_m)$ in $m$ independent
 variables $t_1, \dots , t_m$ over  $\bar{\Q}$.
 One may realize $2^{m}\cdot\ST_m$ as the following  group of (linear) automorphisms of $L$:
 $$(s; \epsilon_1, \dots , \epsilon_m):
t_i \mapsto \epsilon_i t_{s(i)}, \ i=1, \dots m$$
 where $$s \in \ST_m, \ \epsilon_i=\pm 1 .$$
 Let $K$ be the subfield of $2^{m}\cdot\ST_m$-invariants in $L$. Clearly, $L/K$ is a finite
 Galois extension with Galois group $2^{m}\cdot\ST_m$. In particular, $\bar{L}=\bar{K}$.
Since $m \ge 5$, the only normal subgroups in $\ST_m$ are the subgroup $\{1\}$ of even index
$m!$, the alternating (sub)group $\AT_m$ of index $2$ and $\ST_m$ itself.

 The  even degree $2m$ polynomial
 $$h(x) =\prod_{i=1}^m (x^2-t_i^2)=\prod_{i=1}^m (x-t_i) \prod_{i=1}^m (x+t_i)$$
 lies in $K[x]$ and its splitting field coincides
 with $L$. It follows that $\Gal(h)=2^{m}\cdot\ST_m$.  Applying Theorem \ref{main} to the odd
 degree $(2m+1)$ polynomial
$$f(x):= x \cdot h(x)=x \cdot \prod_{i=1}^m (x^2-t_i^2),$$
 we conclude that the
 endomorphism ring (over $\bar{L}$) of the $m$-dimensional prymian $\mathrm{Prym}(C_{f,p})$ is canonically isomorphic to
   $\Z[\zeta_p]$.
\end{ex}

\begin{ex}
\label{magma} Suppose that an odd prime $p$ and an even integer $r$ enjoy one of the following properties.
\begin{enumerate}
\item[(1)]
$r \equiv 2 \bmod p$ (e.g., $r=2$, i.e., $m=2p-1$);
\item[(2)]
$r<\log_2(p-1)+2$ ;
\item[(3)]
 $1+2^{r-2}$ is not divisible by $p$.
\end{enumerate}
Let us put
$$K=\Q(\zeta_p), m=pr-1, \quad u(x)=x^m-x-1, \quad h(x)=u(x^2)=x^{2m}-x^2-1.$$
By Theorem \ref{x2mx2} (see below),
$$\Gal(f/K)=\Gal(h/\K)=\W(\D_{m}).$$
Applying Theorem \ref{main} to 
$$K=\Q(\zeta), \ f(x)=x h(x)=x^{2m+1}-x^3-x \in  K[x],$$
we conclude that 
 the complex $m(p-1)/2$-dimensional Prym variety $\mathrm{Prym}(C_{f,p})$ is  simple,  is not isomorphic to a jacobian (as an algebraic variety); in addition, its  endomorphism ring is canonically isomorphic to
   $\Z[\zeta_p]$.

\end{ex}

\begin{proof}[Proof of Theorem \ref{main0}]
The assertions (i) (except (i)(1) ) and (ii)(a) are already proven in Subsection \ref{Triprym} and
Remark \ref{multprime}.  Since $\ST_m$ is the doubly transitive permutation
group that does not contain normal subgroups of odd index (except $\ST_m$
itself) and $\W(\D_m)=2^{m-1}\cdot \ST_m$, the assertion (ii)(b1) follows from
Theorem \ref{main} applied to $\GG=\ST_m$.

In order to prove the assertion (i)(1), notice that (in the notation of Remark \ref{multprime})
$$\mathrm{mult}(\zeta_p^{-1})=r \cdot 1-1=r-1< \frac{1}{p} \cdot pr-\frac{p-1}{p}=
\frac{1}{p} \cdot \frac{2 \ \dim(\Prym(C_{f,p}))}{p-1}-\frac{p-1}{p}.$$
Now the assertion i)(1)  follows from the assertion (i)(2) combined with the
Theorem 2.17 of \cite{ZarhinAB}.

In order to prove the assertion (ii)(b2), notice that we already know (thanks
to the assertion (ii)(b1) ) that 
$$\End(\mathrm{Prym}(C_{f,p}))=\Z[\delta_p]\cong\Z[\zeta_p].$$
This implies that $\mathrm{Prym}(C_{f,p})$ is absolutely simple and its endomorphism ring  has exactly one Rosati involution that acts on 
$$\End(\mathrm{Prym}(C_{f,p})) =\Z[\zeta_p]$$
as the complex conjugation on the $p$th cyclotomic ring.. Clearly, this involution is 
$\delta_p$-invariant. So, if $\mathrm{Prym}(C_{f,p})$ is
isomorphic to the jacobian of a smooth connected projective curve then $\delta_p$ respects the corresponding canonical principal polarization attached to the curve.
Now the assertion (ii)(b2) follows from the assertion (i)(1)(C).

\end{proof}

\section{Appendix: Galois properties of certain polynomials}
\label{appen}

\begin{thm}
\label{x2mx2}
Let $m \ge 3$ be an odd integer. Then the following statements hold.
\begin{itemize}
\item[(i)]
 The polynomial
$$h(x)=x^{2m}-x^2-1$$ is irreducible over $\Q$ and its Galois group over $\Q$
is  $\W(\D_m)$.
\item[(ii)]
Suppose that $m=pr-1$ where $p$ is an odd prime and $r \ge 2$ is an integer.
Then the Galois group of $h(x)$ over the $p$th cyclotomic field $\Q(\zeta_p)$ is still $\W(\D_m)$ if $r$ enjoys one of the following properties.
\begin{enumerate}
\item[(1)]
$r \equiv 2 \bmod p$ (e.g., $r=2$, i.e., $m=2p-1$);
\item[(2)]
$r<\log_2(p-1)+2$ ;
\item[(3)]
 $1+2^{r-2}$ is not divisible by $p$.
\end{enumerate}
\end{itemize}

\end{thm}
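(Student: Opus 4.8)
The plan is to work throughout with $h(x)=x^{2m}-x^2-1=u(x^2)$, where $u(x)=x^m-x-1$, and to reduce every assertion to the arithmetic of the single quadratic invariant $\Delta:=\mathrm{disc}(u)$. I first record a group-theoretic fact: for $m$ odd the heart $E_m^0$ is an irreducible $\mathbf A_m$-module over $\F_2$ (already used in Lemma~\ref{evenR}), so its $\ST_m$-coinvariants vanish and $\W(\D_m)^{\mathrm{ab}}=\ST_m^{\mathrm{ab}}=\Z/2$. Hence the unique quadratic subextension of $\Q(\RR_h)/\Q$ is the discriminant field $\Q(\sqrt{\Delta})$ of $u$ (it is cut out by $\kappa_u$ followed by the sign character of $\ST_m$). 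As $\Q(\zeta_p)$ contains a unique quadratic field $\Q(\sqrt{p^{\ast}})$, $p^{\ast}:=(-1)^{(p-1)/2}p$, and every abelian subextension of $\Q(\RR_h)/\Q$ lies in $\Q(\sqrt{\Delta})$, I obtain the criterion governing (ii): once $\Gal(h/\Q)=\W(\D_m)$ is known, one has $\Gal(h/\Q(\zeta_p))=\W(\D_m)$ if and only if $\Q(\sqrt{\Delta})\ne\Q(\sqrt{p^{\ast}})$, i.e. $\Delta$ and $p^{\ast}$ lie in different classes of $\Q^{\times}/(\Q^{\times})^2$.

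For (i) I assemble the hypotheses of Lemma~\ref{evenR}(ii) with $K=\Q$. It is classical (Selmer's irreducibility of $x^m-x-1$, together with the determination of its group) that $\Gal(u/\Q)=\ST_m$; moreover $-u(0)=1$ is a square, so $\Gal(h)\subseteq\W(\D_m)$. The one remaining point is $\Q(\RR_h)\ne\Q(\RR_u)$, equivalently, by Capelli's criterion, that a root $\alpha$ of $u$ is not a square in $\Q(\alpha)$. Since $\Gal(u)=\ST_m$ permutes the classes $[\alpha_i]\in\Q(\RR_u)^{\times}/(\Q(\RR_u)^{\times})^2$ transitively, these classes are either all trivial or all nontrivial, and I exclude the trivial case by a $2$-adic computation of the class of $\alpha$ from the relation $\alpha^m=\alpha+1$ (note $2$ is unramified in $\Q(\RR_u)$ because $u\bmod 2=x^m+x+1$ is separable). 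Granting this, the nonzero $\mathbf A_m$-submodule $\ker\kappa_u\subseteq E_m^0$ equals the irreducible module $E_m^0$, giving $\Gal(h/\Q)=\W(\D_m)$.

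For (ii) I compute $\Delta=(-1)^{m(m-1)/2}\bigl(m^m-(m-1)^{m-1}\bigr)$. Reducing modulo $p$ with $m\equiv-1$ gives $m^m-(m-1)^{m-1}\equiv-(1+2^{\,r-2})\pmod p$. Under hypothesis (3) — hence under (2), which forces $1<1+2^{\,r-2}<p$ — we get $v_p(\Delta)=0$; then $p$ is unramified in $\Q(\sqrt{\Delta})$ but ramified in $\Q(\sqrt{p^{\ast}})$, so the two quadratic fields differ and (ii) follows. Under hypothesis (1), $r\equiv2\pmod p$, I expand modulo $p^2$ using $m\equiv 2p-1\pmod{p^2}$; writing $k=(r-2)/p$ (even, as $r$ is even and $p$ is odd), the expansion shows that $v_p(\Delta)\ge 1$ exactly when $2^k\equiv-1\pmod p$. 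Since $2^k=(2^{k/2})^2$ is then a square mod $p$, this already forces $p\equiv1\pmod4$, and in that case the next term yields $v_p(\Delta)=1$ generically. When $v_p(\Delta)=0$ I conclude as before; when $v_p(\Delta)=1$ I fall back on the remaining local invariants of the square class, namely the sign $\mathrm{sgn}(\Delta)=(-1)^{(m-1)/2}$ against $p^{\ast}>0$, and the splitting of $2$, read off from $\Delta\equiv(-1)^{(m-1)/2}\,m\pmod 8$ (using $(m-1)^{m-1}\equiv 0\pmod 8$ for $m\ge 5$) against $p\bmod 8$.

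The hard part is precisely this last situation in (1): the sub-case $p\equiv1\pmod4$, $2^k\equiv-1\pmod p$, in which $v_p(\Delta)$ is odd, $\mathrm{sgn}(\Delta)=\mathrm{sgn}(p^{\ast})$, and $2$ has the same behaviour in $\Q(\sqrt{\Delta})$ and in $\Q(\sqrt{p^{\ast}})$. The congruences force this sub-case to occur only when $p\equiv1\pmod 8$ (and it is governed by still higher $2$-adic congruences on $p$), so it is rare; but to rule out $\Delta\sim p^{\ast}$ one cannot argue with the local data at $p$, $2$ and $\infty$ alone. One must certify that the entire square class of $\Delta$ differs from that of $p^{\ast}$, i.e. produce an auxiliary prime $\ell$ with $v_\ell(\Delta)$ odd (equivalently, a place where the Legendre symbols of $\Delta$ and $p^{\ast}$ disagree). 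Extracting such an obstruction uniformly from the identity $\Delta=\pm\bigl(m^m-(m-1)^{m-1}\bigr)$ under the constraint $r\equiv2\pmod p$ is the crux of the argument, and the step I expect to cost the most work.
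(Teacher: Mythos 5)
Your proposal has two genuine gaps, one in each part. In part (i) the entire weight of the argument rests on showing that a root $\alpha$ of $u(x)=x^m-x-1$ is not a square in the splitting field $\Q(\RR_u)$ (equivalently $\Q(\RR_h)\ne\Q(\RR_u)$, so that $\ker\kappa_u=E_m^0$), and this is exactly the step you do not carry out. The proposed ``$2$-adic computation'' is not just unfinished but doubtful as a method: since $2$ is unramified and every element of a finite field of characteristic $2$ is a square, the local obstruction to $\alpha$ being a square lives in $(1+2\mathcal{O})/(1+2\mathcal{O})^2$ and requires knowing $\alpha$ modulo $8$ relative to its Teichm\"uller lift; the relation $\alpha^m=\alpha+1$ gives no visible uniform control of this, and a local criterion can only ever certify non-squareness, so if the computation comes out ``square at every place over $2$'' you learn nothing. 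The paper takes a completely different route here: it first proves that $u(x^2)$ is irreducible over $\Q(\sqrt{\Delta_1})$ by an elementary coefficient analysis of a hypothetical factorization $u(x^2)=-w_1(x)w_1(-x)$ reduced modulo $2$ (Lemmas \ref{irredx2m} and \ref{IrredDelta}), and then deduces $\sqrt{\alpha}\notin\Q(\RR_u)$ purely group-theoretically: otherwise $\mathbf{A}_m=\Gal(\Q(\RR_u)/\Q(\sqrt{\Delta_1}))$ would act transitively on the $2m$ roots of $h$ and hence possess a subgroup of index $2m$, impossible for $m\ge 9$ (Lemma \ref{squareRoot}); the cases $m=3,5,7$ are settled by MAGMA. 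Your sketch also silently assumes the small odd $m$ are covered by the same uniform argument, which you have not justified.

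In part (ii) your reduction to comparing the unique quadratic subfield $\Q(\sqrt{\Delta})$ of $\Q(\RR_h)$ with $\Q(\sqrt{p^{\ast}})$ is correct, and your treatment of hypotheses (2) and (3) agrees in substance with the paper's (Remark \ref{DeltaC} shows $p\nmid\Delta_1$, hence $p$ is unramified in $\Q(\RR_h)$, hence $\Q(\RR_h)$ and $\Q(\zeta_p)$ are linearly disjoint). But you explicitly leave hypothesis (1) unresolved in the sub-case $2^{(r-2)/p}\equiv-1\pmod p$, so the proposal does not prove the stated theorem. You should be aware that this difficulty is an artifact of the statement rather than something to be overcome: the paper's own computation in Remark \ref{DeltaC}(ii) establishes (1) only under the congruence $r\equiv 2\pmod{p-1}$, for which $2^{r-2}\equiv 1\pmod p$ and (1) becomes a trivial instance of (3); the ``$\bmod\ p$'' in the theorem's statement appears to be a slip, and under the literal reading your hard sub-case is not covered by the paper's argument either. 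So the honest conclusion is that your plan, even if the $2$-adic step in (i) were repaired, proves (2), (3) and the corrected form of (1), not (1) as literally stated.
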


In order to prove Theorem \ref{x2mx2}, we need the following assertion.

\begin{lem}
\label{irredx2m}
Let $m \ge 3$ be an odd integer, $u(x) \in \Z[x]$ a monic degree $m$ polynomial that enjoy the following properties.

\begin{itemize}
\item[(a)]
$u(x)$ is irreducible over $\Q$; in particular, its constant term is not zero.
\item[(b)]
$u(x)+(x+c) \in x^3 \Z[x]$ for some nonzero $c\in \Z$, i.e., the coefficient of $u(x)$ at $x^2$ is $0$ while the coefficient at  $x$ is $-1$.
\end{itemize}
Then $u(x^2)$ is irreducible over $\Q$.
\end{lem}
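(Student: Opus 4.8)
The plan is to reduce the irreducibility of $u(x^2)$ to a statement about square roots in a number field, and then to extract a contradiction from hypothesis (b) by an elementary parity computation on the low-order coefficients.

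First I would record the standard reduction. Let $\theta\in\bar\Q$ be a root of $u$ and put $K=\Q(\theta)$, so $[K:\Q]=m$ by (a). Every root of $u(x^2)$ is a square root of a root of $u$, and since $u$ is irreducible its roots are Galois-conjugate; hence $u(x^2)$ is irreducible over $\Q$ if and only if the quadratic $x^2-\theta$ is irreducible over $K$, i.e. if and only if $\theta$ is not a square in $K$. Indeed, if $\theta=\gamma^2$ with $\gamma\in K$ then $[\Q(\gamma):\Q]=m<2m$ and $u(x^2)$ is reducible; conversely, if $\theta$ is not a square, a root $\beta$ of $u(x^2)$ satisfies $[\,\Q(\beta):\Q\,]=[K(\beta):K]\,[K:\Q]=2m$, which forces $u(x^2)$ to be the minimal polynomial of $\beta$, hence irreducible. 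So it suffices to prove that $\theta$ is not a square in $K$.

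Next I would argue by contradiction, assuming $\theta=\gamma^2$ with $\gamma\in K$. Then $\Q(\gamma)=K$, so the minimal polynomial $v$ of $\gamma$ is monic of degree $m$, and by Gauss's lemma $v\in\Z[x]$ (a monic rational factor of the monic integer polynomial $u(x^2)$). Since $v(x)\mid u(x^2)$ and $u(x^2)$ is even, also $v(-x)\mid u(x^2)$; as $v$ is irreducible, not odd (its constant term $v(0)\ne 0$ because $\gamma\ne 0$) and not even (its degree $m$ is odd), the polynomials $v(x)$ and $v(-x)$ are coprime. Comparing degrees and leading coefficients then yields the palindromic factorization
$$u(x^2)=-\,v(x)\,v(-x).$$
The crux, and the point where hypothesis (b) enters decisively, is a short coefficient comparison. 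Writing $v(x)=\sum_{k=0}^m b_k x^k$ with $b_m=1$ and reading off the coefficients of $x^0$, $x^2$ and $x^4$ on both sides, while using that the coefficients of $u$ at $x^2$ and at $x$ are $0$ and $-1$, gives
$$b_0^2=c,\qquad 2b_0b_2-b_1^2=1,\qquad 2b_0b_4-2b_1b_3+b_2^2=0.$$
From the middle relation $b_1$ is odd, so $b_1^2\equiv 1\pmod 8$ and $2b_0b_2=b_1^2+1\equiv 2\pmod 8$, whence $b_0b_2\equiv 1\pmod 4$ and in particular $b_2$ is odd. But the third relation forces $b_2^2=2(b_1b_3-b_0b_4)$ to be even, so $b_2$ is even, a contradiction. (For $m=3$ one has $b_3=1$ and $b_4=0$, and the identical computation applies.) Hence $\theta$ is not a square in $K$, and by the first step $u(x^2)$ is irreducible over $\Q$.

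The main obstacle I anticipate is not the final parity argument, which is immediate once everything is in place, but establishing the palindromic factorization $u(x^2)=-v(x)v(-x)$ cleanly — specifically, verifying the coprimality of $v(x)$ and $v(-x)$ so that their product equals $u(x^2)$ rather than a proper divisor of it; here the hypotheses $c\ne 0$ and $m$ odd are exactly what exclude the degenerate even/odd cases. I would emphasize that hypothesis (b) is doing real work: it is the vanishing of the $x^2$-coefficient of $u$ together with the value $-1$ of its $x$-coefficient that produces the incompatible parities of $b_2$, so the conclusion would fail for a generic low-order pattern.
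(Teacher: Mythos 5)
Your proof is correct, and its core --- the palindromic factorization $u(x^2)=-v(x)\,v(-x)$ with $\deg v=m$, followed by the parity contradiction among $b_0,b_1,b_2$ read off from the coefficients of $x^0$, $x^2$, $x^4$ --- is exactly the paper's argument: the paper's relations $-a_0^2=-c$, $a_1^2-2a_0a_2=-1$, $-2a_0a_4+2a_1a_3-a_2^2=0$ are your three equations up to sign, and the paper draws the same conclusion ($a_1$ odd, hence $a_2$ odd from the second relation, yet $a_2$ even from the third). Where you genuinely differ is in how the factorization is reached. The paper works entirely with polynomial arithmetic over $\Z$: it takes an arbitrary monic irreducible factor $w_1$ of $u(x^2)$, splits into the cases where $w_1(x)$ and $w_1(-x)$ are coprime or proportional, kills the proportional case via the irreducibility of $u$, and in the coprime case must still argue that the cofactor of $w_1(x)w_1(-x)$ is trivial in order to force $\deg w_1=m$. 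You instead invoke the field-theoretic criterion that $u(x^2)$ is irreducible iff a root $\theta$ of $u$ is not a square in $\Q(\theta)$; assuming $\theta=\gamma^2$ hands you, for free, an irreducible factor of degree exactly $m$ (the minimal polynomial of $\gamma$), so the case analysis and the cofactor argument disappear. This is shorter and cleaner. What the paper's more pedestrian route buys is that it transplants essentially verbatim to the relative setting of Lemma~\ref{IrredDelta}, over the ring of integers of $\Q(\sqrt{\Delta_c})$, where the only extra input is that $2$ is unramified; your reduction adapts too, but the parity step would then have to be rephrased as congruences modulo $2\mathcal{O}$. One small point to make explicit in your write-up: the coprimality of $v(x)$ and $v(-x)$ holds because two irreducible polynomials are either coprime or associates, and $v(-x)=\lambda v(x)$ forces $\lambda=(-1)^m=-1$, i.e.\ $v$ odd, which your observation $v(0)\neq 0$ excludes; the remark that $v$ is not even is not actually needed.
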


\begin{proof}[Proof of Lemma \ref{irredx2m}]
Clearly,  $u(x^2)$ is a degree $2m$ monic polynomial with integer coefficients, whose constant term is $-c$. More precisely, property (b)
implies that
\begin{equation}
\label{ux6}
u(x^2)- (x^2-c) \in x^6 \Z[x].
\end{equation}
Suppose that $u(x^2)$ is not irreducible. Since $u(x^2)$ is a monic polynomial with integer coefficients,  there are monic nonconstant polynomials 
$w_1(x), w_2(x) \in \Z[x]$ such that
$$u(x^2)=w_1(x) w_2(x);$$
in addition, $w_1(x)$ is irreducible over $\Q$. Let us put $d:=\deg(w_1) \ge 1$.  Since $u(x^2)$ is obviously even,
$$u(x^2)=w_1(-x) w_2(-x);  \quad  w_1(-x),   w_2(-x)\in \Z[x].$$
Clearly, $w_1(-x)$ is also an irreducible polynomial of degree $d$ over $\Q$, whose leading coefficient is $(-1)^d$.
Since the degree of monic $u(x^2)$ is $2m$, which is even, the leading coefficient of $w_2(-x)$ is also $(-1)^d$.
The irreducibility of $w_1(x)$ and $w_1(-x)$ implies that one of the following conditions holds.

\begin{itemize}
\item[(i)]
$w_1(x)$ and $w_1(-x)$ are relatively prime;
\item[(ii)]
$w_1(-x)=(-1)^d w_1(x).$
\end{itemize}

{\bf Case (ii)}  If $d$ is odd then $w_1(x)$ is an odd polynomial and therefore $w_1(0)=0$.
This implies that $0$ is a root of $u(x^2)$, which is wrong, since the constant term of $u(x^2)$ is $-c \ne 0$.
So, $d$ is even and therefore $w_1(x)$ is even. This implies that $w_2(x)$ is also even. This implies that there are monic
polynomials
$$v_1(x), \ v_2(x)\in \Z[x]$$
such that
$$w_1(x)=v_1(x^2), \ \  w_2(x)=v_2(x^2)$$
and therefore
$$\deg(v_1)=\frac{\deg(w_1)}{2}>0,  \quad \deg(v_2)=\frac{\deg(w_2)}{2}>0, \quad u(x^2)=v_1(x^2) v_2(x^2).$$
It follows that
$$u(x)=v_1(x) v_2(x),$$
which contradicts the irreducibility of $u(x)$.  So, Case (ii) does not hold.

{\bf Case (i)} 
Since both $w_1(x)$ and $w_1(-x)$ are divisors of $u(x^2)$, their product
$w_1(x) w_1(-x)$ divides $u(x^2)$ in $\Q[x]$.  Since
$w_1(x) (-1)^d w_1(-x)$ is a monic polynomial  with integer coefficients
that divides  $u(x^2)$ in $\Q[x]$, it also divides monic $u(x^2)$ in $\Z[x]$.
This means that there is a monic polynomial $v(x) \in \Z[x]$ such that
$$u(x^2)=v(x) w_1(x) (-1)^d w_1(-x)=(-1)^d w_1(x) w_1(-x) v(x).$$

It follows that $v(x)$ is an even polynomial of degree $2m-2d$ that divides $u(x^2)$.
Hence, there is a monic polynomial $v_{1/2}(x)\in \Z[x]$ of degree $m-d$ such that
$v(x)=v_{1/2}(x^2)$. This implies that $v_{1/2}(x)^2$ divides $u(x^2)$ in $\Q[x]$.
It follows that $v_{1/2}(x)$ divides  $u(x)$ in  $\Q[x]$. Since
$$\deg(v_{1/2})=m-d<m=\deg(u),$$
the irreducibility of $u(x)$ implies that $v_{1/2}(x)$ is a  constant. Since $v_{1/2}(x)$ is monic,
we get $v_{1/2}(x)=1$, i.e.,
$$u(x^2)=(-1)^d  w_1(-x) \cdot w_1(x).$$
This implies that
$$\deg(w_1)=\frac{2m}{2}=m.$$
It follows that
$$u(x^2)=(-1)^m  w_1(-x) \cdot w_1(x)=-w_1(-x) \cdot w_1(x)$$
(recall that $m$ is odd).  We need the notation for coefficients of $w_1(x)$:
$$w_1(x)=\sum_{i=0}^m a_i x^i \in \Z[x],  \quad a_i \in \Z.$$
Then
$$u(x^2)=-\left(\sum_{i=0}^m a_i (-x)^i \right)\left(\sum_{i=0}^m a_i x^i \right)=$$
$$\left(-a_0+a_1x-a_2 x^2+a_3 x^3-a_4 x^4 +\dots\right) 
\left(a_0+a_1 x+a_2 x^2+a_3 x^3+a_4 x^4+\dots\right).$$
Combining it with \eqref{ux6}, we get the equality
$$-x^2-c=-a_0^2+\left(a_1^2-2 a_0 a_2\right)x^2+\left(-2a_4a_0+2a_1a_3-a_2^2\right)x^4$$
in $\Z[x]$. This means that
\begin{equation}
\label{a0a1a27}
-a_0^2=-c, \quad a_1^2-2 a_0 a_2=-1, \quad -2a_4a_6+2a_1a_3-a_2^2=0.
\end{equation}
First equality of \eqref{a0a1a27} implies that $a_0 \ne 0$.
Second equality of \eqref{a0a1a27} implies that $a_1$ is odd ,
i.e., $a_1=2b+1$ with $b \in \Z$. In addition,
$$a_2=\frac{a_1^2+1}{2 a_0}= \frac{(2b+1)^2+1}{2 a_0}=
 \frac{2b^2+2b+1}{a_0}.$$
 Hence, $a_2$ is a divisor of the odd integer $2b^2+2b+1$ and therefore is also odd.
However, third equality of \eqref{a0a1a2} implies that $a_2$ is even. The obtained contradiction
proves that Case II does not hold, which ends the proof.
\end{proof}

\begin{rem}
\label{DeltaC}
\begin{itemize}
\item[(i)]
The discriminant of a trinomial $u(x)=x^m-x-c$ (with odd integers $m>1$ and $c \in \Z$) is
$$\Delta_c:=(-1)^{m(m-1)/2} m^m (-c)^{m-1}+(-1)^{(m-1)(m-2)/2} (m-1)^{m-1}(-1)^m=$$
$$(-1)^{(m^2+m)/2} c^{m-1}+(-1)^{(m^2-m+2)/2}(m-1)^{m-1}.$$
(see \cite[Problem 835]{FS}).  Since $m$ is odd, the integers $(m^2+m)/2$ and 
$m^2-m+2)/2$ have the same parity (their difference $m-1$ is an even integer).
This implies that
$$\Delta_c=\pm \left(c^{m-1}+(m-1)^{m-1}\right).$$
In particular, $\Delta_c$ is an {\sl odd integer}, because $c$ is odd and $(m-1)$ is even. 

On the other hand, the discriminant $\Delta^{(2)}_c$ of the trinomial
$$u(x^2)=x^{2m}-x^2-c$$
can be explicitly computed by the formula 
\begin{equation}
\label{discUx2}
\Delta^{(2)}_c=2^m \Delta_c^2 \cdot 1 \cdot (-1)=-2^m \Delta_c^2=-2^m  \left(c^{m-1}+(m-1)^{m-1}\right)^2
\end{equation}
(see \cite[Problem 853]{FS}).

 For example, if $c=1$ then
$$\Delta_1=(-1)^{(m^2+m)/2} \cdot 1^{m-1}+(-1)^{(m^2-m+2)/2}(m-1)^{m-1}=
(-1)^{(m^2+m)/2}+(-1)^{(m^2-m+2)/2}(m-1)^{m-1}.$$
\item[(ii)]
Suppose that that $m=pr-1$ where $p$ is an odd prime and $r \ge 2$ is an even  integer.  Then
$m-1 \equiv (-2) \bmod p$.

 If $c=1$ then
$$\Delta_1=\pm \left(1+(pr-2)^{pr-2}\right) \equiv \pm \left(1+2^{p(r-1)+(p-2)}\right) \bmod p$$
$$\equiv  \pm \left(1+2^{r-1+p-2}\right) \bmod p \equiv  \pm \left(1+2^ {p-1+r-2}\right) \bmod p
\equiv \pm \left(1+2^{r-2}\right) \bmod p.$$
This  says that $\Delta_1$ is not divisible by prime if and only if $1+2^{r-2}$ is not divisible by $p$.
In light of \eqref{discUx2}, $\Delta^{(2)}_1$ is not divisible by prime if and only if $1+2^{r-2}$ is not divisible by $p$.

In particular, if $r\equiv 2 \bmod (p-1)$ (e.g., $r=2$ and $m=2p-1$) then both $\Delta_1$ and $\Delta^{(2)}_1$ are {\sl not} divisible by $p$,
because in this case $r-2=k(p-1)$ for some nonnegative integer $k$ and
$$1+2^{r-2}=1+2^{p(k-1)} \equiv (1+1^{k-1})\bmod p =2 \bmod p \ne 0 \bmod p,$$ 
 since $p$ is an {\sl odd} prime. 
 
 Another example of the non-divisibility of the discriminants  
is provided by {\sl small} $r$. Namely, if $r<\log_2(p-1)+2$ then the integer
$1+2^{r-2}$ is srictly grieater than $1+2^{\log_2(p-1)}=p$ and therefore is not divisible by $p$.
This implies that if $r$ is an even integer such that
$$r<\log_2(p-1)+2$$
then both $\Delta_1$ and $\Delta^{(2)}_1$ are {\sl not} divisible by $p$.
\end{itemize}
\end{rem}

\begin{lem}
\label{IrredDelta}
Let $m \ge 7$ be an odd integer. Let $c$ be an odd integer such that that the trinomial
$u(x):=x^m-x-c$ is irreducible over $\Q$ and its Galois group is $\mathbf{S}_m$.
Then
 the trinomial $x^{2m}-x^2-c$  is irreducible over the quadratic field $\Q(\sqrt{\Delta_c})$.
\end{lem}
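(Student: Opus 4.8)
The plan is to reduce the irreducibility of $h(x):=u(x^2)=x^{2m}-x^2-c$ over $F:=\Q(\sqrt{\Delta_c})$ to a statement about square classes in the splitting field of $u$. I will use the elementary criterion that, for a field $F$ of characteristic $\ne 2$ and a monic $u$ that is irreducible over $F$ with root $\alpha$, the polynomial $u(x^2)$ is irreducible over $F$ if and only if $\alpha$ is not a square in $F(\alpha)$: indeed, if $\beta^2=\alpha$ then $[F(\beta):F(\alpha)]=2$ precisely when $\alpha\notin F(\alpha)^2$, so that $[F(\beta):F]=2m=\deg h$ and the minimal polynomial of $\beta$ is forced to equal $h$. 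Thus it suffices to check two things: that $u$ remains irreducible over $F$, and that a root $\alpha$ of $u$ is not a square in $F(\alpha)$.

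For the first point, let $L$ be the splitting field of $u$ over $\Q$, so $\Gal(L/\Q)=\mathbf{S}_m$ by hypothesis. Since $\mathbf{S}_m\not\subseteq\mathbf{A}_m$, the discriminant $\Delta_c$ is a nonsquare and $[F:\Q]=2$; moreover $F=\Q(\sqrt{\Delta_c})$ is exactly the fixed field of the unique index-two subgroup $\mathbf{A}_m$ of $\mathbf{S}_m$, so $\Gal(L/F)=\mathbf{A}_m$. As $\mathbf{A}_m$ is transitive on the $m$ roots of $u$ (here $m\ge 7$), $u$ stays irreducible over $F$.

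For the second point I will prove the stronger statement that no root $\alpha_i$ of $u$ is a square in the larger field $L$; since $F(\alpha)\subseteq L$, this gives in particular that $\alpha$ is not a square in $F(\alpha)$. Let $M=L(\sqrt{\alpha_1},\dots,\sqrt{\alpha_m})$ be the splitting field of $h$. Because the $\alpha_i$ are $\Gal(L/\Q)$-conjugate and the property of being a square in $L$ is Galois-stable, the set of indices $i$ with $\alpha_i\in (L^*)^2$ is $\mathbf{S}_m$-stable, hence empty or everything; and it is everything exactly when $M=L$. So everything reduces to showing $M\ne L$.

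Finally, I would deduce $M\ne L$ from the discriminant computation of Remark \ref{DeltaC}. The quantity $\sqrt{\Delta^{(2)}_c}$ lies in $M$, being (up to sign) the product of the differences of the roots of $h$; and by \eqref{discUx2} we have $\Delta^{(2)}_c=-2^m\Delta_c^2$. Since $m$ is odd and $\Delta_c$ is an odd integer, the squarefree part of $\Delta^{(2)}_c$ is $-2$, so $\Q(\sqrt{\Delta^{(2)}_c})=\Q(\sqrt{-2})$. On the other hand, the only quadratic subfield of $L$ is $\Q(\sqrt{\Delta_c})$, as $\mathbf{S}_m$ has a unique subgroup of index two. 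The integer $-2\Delta_c$ has $2$-adic valuation $1$, hence is not a square, so $\Q(\sqrt{-2})\ne\Q(\sqrt{\Delta_c})$ and therefore $\sqrt{\Delta^{(2)}_c}\notin L$. Thus $M\ne L$, which completes the argument. The one genuinely non-formal ingredient is this last step: the relation $\Delta^{(2)}_c=-2^m\Delta_c^2$ pins the quadratic field generated by $\sqrt{\Delta^{(2)}_c}$ to $\Q(\sqrt{-2})$ independently of $c$, and a parity check on the $2$-adic valuation then separates it from the unique quadratic subfield $\Q(\sqrt{\Delta_c})$ of $L$.
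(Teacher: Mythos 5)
Your first three reductions are correct and cleanly organized: $u(x^2)$ is irreducible over $F=\Q(\sqrt{\Delta_c})$ if and only if a root $\alpha$ of $u$ is a non-square in $F(\alpha)$; $u$ itself stays irreducible over $F$ because $\Gal(L/F)=\mathbf{A}_m$ is transitive; and by Galois stability everything reduces to showing that the splitting field $M$ of $u(x^2)$ strictly contains $L$. The argument breaks at the final, load-bearing step. The formula $\Delta_c^{(2)}=-2^m\Delta_c^2$ that you quote from \eqref{discUx2} is incorrect (it is mis-stated in the paper as well): writing the roots of $u(x^2)$ as $\pm\sqrt{\alpha_i}$ and pairing the factors gives
$$\mathrm{disc}\bigl(u(x^2)\bigr)=4^m\Bigl(\prod_{i=1}^m\alpha_i\Bigr)\mathrm{disc}(u)^2=4^m(-1)^m u(0)\,\Delta_c^2=4^m c\,\Delta_c^2$$
for odd $m$ and $u(x)=x^m-x-c$. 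Concretely, $\mathrm{disc}(x^6-x^2-1)=33856=184^2$, a perfect square, not $-8\cdot 529$. So the square class of $\Delta_c^{(2)}$ is that of $c$, not $-2$; in particular for $c=1$, the only value used in Theorem \ref{x2mx2}, one has $\sqrt{\Delta_c^{(2)}}\in\Q\subset L$ and your detector sees nothing, so $M\ne L$ is not established. There was an internal warning sign available to you: Theorem \ref{x2mx2} asserts $\Gal(h)=\W(\D_m)$, and every element of $\W(\D_m)$ is an even permutation of the $2m$ roots of $h$, which forces $\Delta_c^{(2)}$ to be a rational square --- flatly inconsistent with your conclusion that $\sqrt{\Delta_c^{(2)}}$ generates $\Q(\sqrt{-2})$.

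The missing content is exactly a proof that some $\sqrt{\alpha_i}$ does not lie in $L$, and this cannot be extracted from a single quadratic invariant of $M$. The paper sidesteps the issue entirely: it reruns the factorization-and-coefficient-comparison argument of Lemma \ref{irredx2m} inside the ring of integers $\mathcal{O}$ of $F$, the only new ingredient being that $2$ is unramified in $F$ (because $\Delta_c$ is odd), so the parity analysis of the coefficients $a_0,a_1,a_2$ modulo $2\mathcal{O}$ still produces a contradiction. Be warned also that the group-theoretic repair you would naturally reach for next --- if all $\sqrt{\alpha_i}\in L$, then $\mathbf{A}_m=\Gal(L/F)$ permutes the $2m$ roots of $h$ transitively and so has a subgroup of index $2m$, impossible for $m\ge 9$ --- is precisely the paper's proof of Lemma \ref{squareRoot}, but there the transitivity on all $2m$ roots is supplied by the irreducibility of $h$ over $F$, i.e.\ by Lemma \ref{IrredDelta} itself; without it the $\mathbf{A}_m$-orbit of $\sqrt{\alpha_1}$ could a priori have size $m$ and no contradiction arises. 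So that route is circular, and a genuinely different argument (such as the paper's $2$-adic one) is needed.
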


\begin{proof}[Proof of Lemma \ref{IrredDelta}]
Let us put $u(x):=x^m-x-c$. Then
$$u(x^2)=x^{2m}-x^2-c.$$
Clearly, the Galois group of  $u(x)$ over the quadratic field $K:= \Q(\sqrt{\Delta_c})$ is $\mathbf{A}_m$, which is a transitive subgroup of $\mathbf{S}_m$. Hence, $u(x)$
 remains irreducible  over $K$.

It follows from Remark \ref{DeltaC} that the quadratic extension  field $K/\Q$
is unramified at $2$. This allows us to mimick arguments of Proof of Lemma \ref{irredx2m}]
in the ring $\mathcal{O}$ of integers in $K$ instead of $\Z$. The unramifiedness of $2$
implies that if $A \in \mathcal{O}$ satisfies $A^2 \in 2  \mathcal{O}$ then $A \in  \mathcal{O}$.

Suppose that $u(x^2)$ is not irreducible in $K[x]$. This means that there  are monic nonconstant polynomials 
$w_1(x), w_2(x) \in K[x]$ such that
$$u(x^2)=w_1(x) w_2(x);$$
in addition, $w_1(x)$ is irreducible over $K$.  Since $u(x^2) \in \mathcal{O}[x]$ and 
$\mathcal{O}$ is an integrally closed domain,
$$w_1(x), w_2(x) \in \mathcal{O}[x].$$
Let us put $d:=\deg(w_1) \ge 1$.  Since $u(x^2)$ is obviously even,
$$u(x^2)=w_1(-x) w_2(-x);  \quad  w_1(-x),   w_2(-x)\in \mathcal{O}[x].$$
Clearly, $w_1(-x)$ is also an irreducible polynomial of degree $d$ over $K$ but its  leading coefficient is $(-1)^d$.
Since the degree of monic $u(x^2)$ is $2m$, which is even, the leading coefficient of $w_2(-x)$ is also $(-1)^d$.
The irreducibility of  both $w_1(x)$ and $w_1(-x)$ implies that one of the following conditions holds.

\begin{itemize}
\item[(i)]
$w_1(x)$ and $w_1(-x)$ are relatively prime;
\item[(ii)]
$w_1(-x)=(-1)^d w_1(x).$
\end{itemize}

{\bf Case (ii)}  If $d$ is odd then $w_1(x)$ is an odd polynomial and therefore $w_1(0)=0$.
This implies that $0$ is a root of $u(x^2)$, which is wrong, since the constant term of $u(x^2)$ is $-1 \ne 0$.
So, $d$ is even and therefore $w_1(x)$ is even. This implies that $w_2(x)$ is also even. It follows that there are monic
polynomials
$$v_1(x), \ v_2(x)\in \mathcal{O}[x]$$
such that
$$w_1(x)=v_1(x^2), \ \  w_2(x)=v_2(x^2).$$
Hence,
$$\deg(v_1)=\frac{\deg(w_1)}{2}>0,  \quad \deg(v_2)=\frac{\deg(w_2)}{2}>0, \quad u(x^2)=v_1(x^2) v_2(x^2).$$
It follows that
$$u(x)=v_1(x) v_2(x),$$
which contradicts the irreducibility of $u(x)$ over $K$.  So, Case (ii) does not hold.

{\bf Case (i)} 
Since both $w_1(x)$ and $w_1(-x)$ are divisors of $u(x^2)$, their product
$w_1(x) w_1(-x)$ divides $u(x^2)$ in $K[x]$.  Since
$w_1(x) (-1)^d w_1(-x)$ is a monic polynomial  with  coefficients in $\mathcal{O}$ 
that divides  $u(x^2)$ in $K[x]$, it also divides monic $u(x^2)$ in $\mathcal{O}[x]$.
This means that there is a monic polynomial $v(x) \in \mathcal{O}[x]$ such that
$$u(x^2)=v(x) w_1(x) (-1)^d w_1(-x)=(-1)^d w_1(x) w_1(-x) v(x).$$

It follows that $v(x)$ is an even polynomial of degree $2m-2d$ that divides $u(x^2)$.
Hence, there is a monic polynomial $v_{1/2}(x)\in \Z[x]$ of degree $m-d$ such that
$v(x)=v_{1/2}(x)^2$. This implies that $v_{1/2}(x)^2$ divides $u(x^2)$ in $\Q[x]$.
It follows that $v_{1/2}(x)$ divides  $u(x)$ in  $\Q[x]$. Since
$$\deg(v_{1/2})=m-d<m=\deg(u),$$
the irreducibility of $u(x)$ implies that $v_{1/2}(x)$ is a  constant. Since $v_{1/2}(x)$ is monic,
we get $v_{1/2}(x)=1$, i.e.,
$$u(x^2)=(-1)^d  w_1(-x) \cdot w_1(x).$$
This implies that
$$\deg(w_1)=\frac{2m}{2}=m.$$
It follows that
$$u(x^2)=(-1)^m  w_1(-x) \cdot w_1(x)=-w_1(-x) \cdot w_1(x)$$
(recall that $m$ is odd).  We need the notation for coefficients of $w_1(x)$:
$$w_1(x)=\sum_{i=0}^m a_i x^i \in \mathcal{O}[x],  \quad a_i \in \mathcal{O}.$$
Then
$$u(x^2)=-\left(\sum_{i=0}^m a_i (-x)^i \right)\left(\sum_{i=0}^m a_i x^i \right)=$$
$$\left(-a_0+a_1x-a_2 x^2+a_3 x^3-a_4 x^4 +\dots\right) 
\left(a_0+a_1 x+a_2 x^2+a_3 x^3+a_4 x^4+\dots\right).$$
Combining it with \eqref{ux6}, we get the equality
$$-x^2-c=-a_0^2+\left(a_1^2-2 a_0 a_2\right)x^2+\left(-2a_4a_0+2a_1a_3-a_2^2\right)x^4$$
in $\mathcal{O}[x]$. This means that
\begin{equation}
\label{a0a1a2}
-a_0^2=-c, \quad a_1^2-2 a_0 a_2=-1, \quad -2a_4a_6+2a_1a_3-a_2^2=0.
\end{equation}
First equality of \eqref{a0a1a2} implies that 
$a_0 \not\in 2 \mathcal{O}$.  
Second equality of \eqref{a0a1a2} implies that 
$a_1=2b+1$ with $b \in \mathcal{O}$. In addition,
$$a_2=\frac{a_1^2+1}{2 a_0}= \frac{(2b+1)^2+1}{2 a_0}
=  \frac{2b^2+2b+1}{a_0}
 \not\in 2 \mathcal{O}.$$
However, third equality of \eqref{a0a1a2} implies that $a_2^2\in 2\mathcal{O}$ and therefore $a_2\in 2\mathcal{O}$.
The obtained contradiction
proves that Case II does not hold, which ends the proof.
\end{proof}

\begin{lem}
\label{squareRoot}
Let $m \ge 9$ be an odd integer. Let $c$ be an odd integer such that that the trinomial
$u(x):=x^m-x-c$ is irreducible over $\Q$ and its Galois group is $\mathbf{S}_m$.
 
 Let  $\mathfrak{R}_u \subset \bar{\Q}$ be the $m$-element set of roots of $u(x)$ and $\Q(\mathfrak{R}_u)\subset \bar{\Q}$ be
 the splitting field of $u(x)$ over $\Q$.  Then:
 \begin{enumerate}
 \item[(i)]
 $\Q(\sqrt{\Delta_c})\subset \Q(\mathfrak{R}_u)$;
 \item[(ii)]
 If $\alpha \in \mathfrak{R}_u \subset \Q(\mathfrak{R}_u)$ is a root of $u(x)$ then $\sqrt{\alpha}\not\in \Q(\mathfrak{R}_u)$.
 \end{enumerate}
\end{lem}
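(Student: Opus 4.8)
The plan is to dispatch (i) by the standard discriminant identity and to reduce (ii) to a ramification computation at the prime $2$. For part (i), since $u(x)$ is separable, the element $\prod_{i<j}(\alpha_i-\alpha_j)$ is a polynomial expression in the roots and hence lies in $\Q(\mathfrak{R}_u)$; as it squares to $\Delta_c=\prod_{i<j}(\alpha_i-\alpha_j)^2$, we get $\Q(\sqrt{\Delta_c})\subseteq\Q(\mathfrak{R}_u)$ at once. (Because $\Gal(u)=\mathbf{S}_m\ne\mathbf{A}_m$, this is exactly the fixed field of $\mathbf{A}_m$, but only the inclusion is needed.)

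For part (ii) the guiding idea is that $\sqrt{\alpha}\in L:=\Q(\mathfrak{R}_u)$ would force $\Q(\sqrt{\alpha})\subseteq L$, and I will derive a contradiction by showing that $2$ is unramified in $L$ yet ramified in $\Q(\sqrt{\alpha})$. First, by Remark \ref{DeltaC}(i) the discriminant $\Delta_c=\mathrm{disc}(u)$ is an \emph{odd} integer; since $\mathrm{disc}(\Q(\alpha))$ divides $\mathrm{disc}(\Z[\alpha])=\Delta_c$, the prime $2$ is unramified in $\Q(\alpha)$, and because the primes ramifying in a Galois closure coincide with those ramifying in the field itself, $2$ is unramified in $L$. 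Next set $\theta:=\sqrt{\alpha}$, a root of $u(x^2)=x^{2m}-x^2-c$; this polynomial is irreducible over $\Q$ by Lemma \ref{irredx2m} (its coefficient at $x^2$ is $0$, at $x$ is $-1$, and $u$ is irreducible by hypothesis), so it is the minimal polynomial of $\theta$ and $\mathrm{disc}(\Z[\theta])=\mathrm{disc}(u(x^2))=-2^m\Delta_c^2$ by \eqref{discUx2}. As $\Delta_c$ is odd, $v_2(\mathrm{disc}(\Z[\theta]))=m$.

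The crux is to pass from this polynomial discriminant to the field discriminant. From $\mathrm{disc}(\Z[\theta])=[\Oc_{\Q(\theta)}:\Z[\theta]]^2\,\mathrm{disc}(\Q(\theta))$ one gets $v_2(\mathrm{disc}(\Q(\theta)))=m-2\,v_2([\Oc_{\Q(\theta)}:\Z[\theta]])$, which is \emph{odd} because $m$ is odd, hence strictly positive. Therefore $2\mid\mathrm{disc}(\Q(\theta))$ and $2$ ramifies in $\Q(\theta)=\Q(\sqrt{\alpha})$. Since ramification indices are multiplicative in towers, a prime unramified in $L$ stays unramified in every subfield of $L$; so if $\sqrt{\alpha}\in L$ then $2$ would be unramified in $\Q(\sqrt{\alpha})$, contradicting the previous sentence. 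Hence $\sqrt{\alpha}\notin L$, which is (ii).

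The main obstacle — indeed the only delicate point — is making sure that the factor $2^m$ appearing in $\mathrm{disc}(u(x^2))$ records genuine ramification in the number field rather than being hidden inside the index $[\Oc_{\Q(\theta)}:\Z[\theta]]^2$. This is precisely where the oddness of $m$ is indispensable: subtracting the even quantity $2\,v_2(\text{index})$ from the odd exponent $m$ keeps $v_2(\mathrm{disc}(\Q(\theta)))$ odd, hence nonzero, forcing ramification at $2$. I expect the argument to close cleanly once the two inputs already established earlier — the oddness of $\Delta_c$ and the irreducibility of $u(x^2)$ — are in hand, and to use nothing beyond $m$ being odd.
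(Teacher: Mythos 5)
Part (i) is fine and is the same (standard) observation the paper makes. For part (ii), however, you take a route entirely different from the paper's, and it has a fatal gap at exactly the step you yourself identify as the crux. Your argument rests on the claim that $v_2\bigl(\mathrm{disc}(u(x^2))\bigr)=m$ is odd, which you import from the paper's formula \eqref{discUx2}, $\Delta^{(2)}_c=-2^m\Delta_c^2$. That formula is wrong. A direct computation with resultants gives
$$\mathrm{disc}\bigl(u(x^2)\bigr)=(-1)^{m(2m-1)}\prod_{\beta}2\beta\,u'(\beta^2)=4^m\Bigl(\prod_i\alpha_i\Bigr)\mathrm{Res}(u,u')^2=4^m\,c\,\Delta_c^2,$$
whose $2$-adic valuation is $2m$, an \emph{even} number. (Sanity check for $m=3$, $c=1$: $\mathrm{disc}(x^6-x^2-1)=33856=2^6\cdot 23^2=184^2$, a perfect square -- as it must be, since $\Gal(h)\subset\W(\D_m)\subset\mathbf{A}_{2m}$ when $c$ is a square -- whereas $-2^3\cdot 23^2$ is negative.) With the correct valuation $2m$, your identity $v_2(\mathrm{disc}(\Q(\theta)))=v_2(\mathrm{disc}(\Z[\theta]))-2v_2([\Oc_{\Q(\theta)}:\Z[\theta]])$ yields an even number that could perfectly well be $0$, so the parity argument proves nothing: you have not shown that $2$ ramifies in $\Q(\sqrt{\alpha})$. (It may in fact ramify -- for $m=3$, $c=1$ Dedekind's criterion shows $2\Oc=\mathfrak{P}^2$ with residue degree $3$, exploiting $u(x^2)\equiv u(x)^2\bmod 2$ -- but establishing that in general requires a different argument, e.g.\ a Dedekind-criterion or local analysis at $2$, not the discriminant-parity trick.)

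For comparison, the paper proves (ii) group-theoretically: if $\sqrt{\alpha}\in\Q(\mathfrak{R}_u)$ for one root, then by normality of $\Q(\mathfrak{R}_u)/\Q(\sqrt{\Delta_c})$ and the irreducibility of $u(x^2)$ over $\Q(\sqrt{\Delta_c})$ (Lemma \ref{IrredDelta}) the whole $2m$-element set $\mathfrak{R}_h$ lies in $\Q(\mathfrak{R}_u)$ and is permuted transitively by $\Gal(\Q(\mathfrak{R}_u)/\Q(\sqrt{\Delta_c}))\cong\mathbf{A}_m$; this would give $\mathbf{A}_m$ a subgroup of index $2m$, which is impossible for $m\ge 9$. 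That argument avoids any discriminant computation for $u(x^2)$ and is where the hypothesis $m\ge 9$ actually enters -- note that your proposed proof never uses $m\ge 9$ at all, which is itself a warning sign. If you want to salvage your approach, you must replace the appeal to \eqref{discUx2} by an honest proof that $2$ ramifies in $\Q(\sqrt{\alpha})$.
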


\begin{proof}
(i) is obvious. It is also clear that $ \Q(\mathfrak{R}_u)$ is  a splitting field of $u(x)$ over $\Q(\sqrt{\Delta_c})$
and the Galois group of the normal  field extension $\Q(\mathfrak{R}_u)/\Q(\sqrt{\Delta_c})$ is the alternating group 
$\Alt((\mathfrak{R}_u) \cong \mathbf{A}_m$.

Notice that the $2m$-element set 
$$\mathfrak{R}_h=\{\beta \in \bar{\Q}\mid \beta^2 \in \mathfrak{R}_u)\}$$
is the set of roots of the polynomial
$$h(x)=u(x^2)=x^{2m}-x^2-c.$$
Taking into account that $h(x)$ is irreducible over $\Q(\sqrt{\Delta_c})$ while the field extension $\Q(\mathfrak{R}_u)/\Q(\sqrt{\Delta_c})$
is Galois, we conclude that if $\sqrt{\beta}\in \Q(\mathfrak{R}_u)$ for some $\beta \in \mathfrak{R}_u$ then the whole
$\mathfrak{R}_h$ lies in $\Q(\mathfrak{R}_u)$ and
$$\Alt((\mathfrak{R}_u) =\Gal(\Q(\mathfrak{R}_u)/\Q(\sqrt{\Delta_c}))$$
acts transitibely on the $2m$-element set $\mathfrak{R}_h$. It follows that
$\Alt((\mathfrak{R}_u)$ has a subgroup of index $2m$, i.e.,  $\mathbf{A}_m$ has a subgroup of index $2m$,
which contradicts to our assumptions on $m$ (see \cite[Th. 5.2A and Remark on p. 147]{DixonMortimer} applied to $r=2$).
\end{proof}

Let $h(x)=u(x^2)$ be as in the proof of Lemma \ref{squareRoot} and 
$\Q(\mathfrak{R}_h)$ be the splitting field of  $h(x)$ over $\Q$. Clearly,
$\Q(\mathfrak{R}_h)$ contains $\Q(\mathfrak{R}_u)$ and $\Q(\mathfrak{R}_h)/\Q(\mathfrak{R}_u)$ 
is a Galois extension that is obtained by extracting square roots fron $m$ elements $\alpha \in \mathfrak{R}_u$.
This implies that that  $\Gal(\Q(\mathfrak{R}_h)/\Q(\mathfrak{R}_u))$  is an elementary commutative abelian $2$-group,
whose order divides $2^m$.

\begin{lem}
\label{2group}
Let $m \ge 9$ be an odd integer. Let $c$ be an odd integer that is a square in $\Q$ and such that that the trinomial
$u(x):=x^m-x-c$ is irreducible over $\Q$ and its Galois group is $\mathbf{S}_m$.


Then  $\Gal(h/\Q)=\Gal(\Q(\mathfrak{R}_h)/\Q)=\W(D_m)$
\end{lem}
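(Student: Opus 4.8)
The plan is to pin down $\Gal(h/\Q)$ as a subgroup of $2^m\cdot\ST_m\subset\Perm(\mathfrak{R}_h)$ by analyzing the natural surjection $\kappa_u\colon\Gal(h)\twoheadrightarrow\Gal(u)$ together with its kernel, exactly as in the proof of Lemma \ref{evenR}(ii). The only role played there by the hypothesis ``$r$ even'' is to guarantee that $m$ is odd, which we have here by assumption, so the argument transfers verbatim to the present setting where $m\ge 9$ is simply an odd integer.

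First I would record the relevant data for $u(x)=x^m-x-c$. Its constant term is $u(0)=-c$, so $-u(0)=c$ is a square in $\Q$ by hypothesis. Writing the roots of $h$ as $\{\pm\beta_1,\dots,\pm\beta_m\}$ with $\beta_i^2=\alpha_i$ the roots of $u$, one has $\big(\prod_{i=1}^m\beta_i\big)^2=\prod_{i=1}^m\alpha_i=-u(0)=c$ (using that $m$ is odd and $u(0)=-c$), so $\prod_i\beta_i=\pm\sqrt c\in\Q$. Since an element of $2^m\cdot\ST_m$ acting by $\beta_i\mapsto\epsilon(i)\beta_{s(i)}$ sends $\prod_i\beta_i$ to $\big(\prod_i\epsilon(i)\big)\prod_i\beta_i$, the invariance of $\prod_i\beta_i$ forces $\prod_i\epsilon(i)=1$ on every element of $\Gal(h)$; equivalently $\Gal(h)\subset\W(\D_m)=2^{m-1}\cdot\ST_m$. (This is precisely Lemma \ref{evenR}(i), using only that $m$ is odd.) In particular $\ker(\kappa_u)\subset E_m^0$.

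Next I would show the kernel is as large as possible. By Lemma \ref{squareRoot}(ii), for a root $\alpha$ of $u$ we have $\sqrt\alpha\notin\Q(\mathfrak{R}_u)$; hence $\Q(\mathfrak{R}_h)\supsetneq\Q(\mathfrak{R}_u)$ and, by Remark \ref{kerKappa}, $\ker(\kappa_u)\cong\Gal(\Q(\mathfrak{R}_h)/\Q(\mathfrak{R}_u))\ne\{0\}$. By normality and \eqref{adjoint}, $\ker(\kappa_u)$ is a nonzero $\Gal(u)$-submodule of $E_m^0$; since $\Gal(u)=\ST_m\supset\mathbf{A}_m$, it is in particular a nonzero $\mathbf{A}_m$-submodule. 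But $E_m^0$ is the heart of the standard permutation representation of $\mathbf{A}_m$ over $\F_2$ (here again $m$ odd is used, so that the all-ones vector is not sum-zero and the heart is the sum-zero submodule $E_m^0$ itself), which is irreducible \cite{Mortimer}. Therefore $\ker(\kappa_u)=E_m^0$.

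Finally I would assemble these facts: $\kappa_u$ is surjective onto $\ST_m$ with kernel $E_m^0$ of order $2^{m-1}$, so $|\Gal(h)|=2^{m-1}\cdot m!=|\W(\D_m)|$; combined with the inclusion $\Gal(h)\subset\W(\D_m)$ this yields $\Gal(h/\Q)=\Gal(\Q(\mathfrak{R}_h)/\Q)=\W(\D_m)$. The substantive input is not in this assembly but in the prior lemmas: the real work is Lemma \ref{squareRoot}(ii), which guarantees that no $\sqrt\alpha$ already lies in the splitting field of $u$ and hence makes the kernel nonzero, together with the $\F_2$-irreducibility of the heart for $\mathbf{A}_m$. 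The one point to be careful about is precisely that every appeal to ``$r$ even'' in Lemma \ref{evenR} is in reality an appeal to ``$m$ odd,'' so that those results apply in the appendix even though $m$ is no longer assumed to be of the form $pr-1$.
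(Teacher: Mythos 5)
Your proposal is correct and follows essentially the same route as the paper, which simply invokes Lemma \ref{evenR}(ii) with $K=\Q$ (the hypothesis $\Q(\mathfrak{R}_h)\ne\Q(\mathfrak{R}_u)$ being supplied by Lemma \ref{squareRoot}(ii), stated just before). You have merely unfolded that citation — verifying $-u(0)=c$ is a square, that the only role of ``$r$ even'' is ``$m$ odd,'' and rerunning the irreducible-heart argument for $E_m^0$ — which matches the paper's intent exactly.
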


\begin{proof}
The assertion follows Lemma \ref{evenR} applied to $K=\Q$.
\end{proof}

\begin{proof}[Proof of Theorem \ref{x2mx2}]

If $m \ge 9$ then the assertion (i) from Lemma \ref{2group} applied to $c=1$.
If $m=3,5,7$ then the result follows from computations with MAGMA, see \cite{MAGMA}.
(The case $m=5$ was already discussed in \cite{ZarhinI}.)

Let us prove the assertion (ii). 
It follows from Remark \ref{DeltaC} that under our assumptions on $p$ the discriminants of $\Q(\RR_h)$ and $\Q(\zeta_p)$ are relatively prime
and therefore the fields  $\Q(\RR_h)$ and $\Q(\zeta_p)$ are linearly disjoint over $\Q$. It follows that
the Galois group of $h(x)$ over $\Q(\zeta_p)$ is the same as over $\Q$. This ends the proof.

\end{proof}

\end{document}